\def\R{\mathbb R}
\def\R{\mathbb R}  
\def\TM{T_{max}} 
\def
\newcommand{\Om}{\Omega} 
\newcommand{\norm}[2][]{\|#2\|_{#1}}
\newcommand{\normm}[2]{\|#2\|_{#1}}
\newcommand{\set}[1]{\{#1\}}
\newcommand{\kl}[1]{\left(#1\right)}
\newcommand{\f}[2]{\frac{#1}{#2}}
\newcommand{\upto}{\nearrow}
\newcommand{\Tmax}{T_{max}}
\newcommand{\ubar}{\overline{u}}
\newcommand{\Ombar}{\overline{\Om}}
\newcommand{\Lom}[1]{L^{#1}(\Omega)}
\newcommand{\nn}{\nonumber}
\newcommand{\bdry}{\vert_{∂\Om}}
\newcommand{\io}{\int_\Omega}
\newcounter{Ccnt}
\newcommand\C[1]{%
	\@ifundefined{C-#1}%
	{\stepcounter{Ccnt}\expandafter\xdef\csname C-#1\endcsname{\arabic{Ccnt}}}%
	{}%
	c_{\csname C-#1\endcsname}}
\newtheorem{theorem}{Theorem}[section]
\newtheorem{corollary}[theorem]{Corollary}
\newtheorem{lemma}[theorem]{Lemma}
\newtheorem{remark}{Remark}
\newcommand\scalemath[2][1.0]{\scalebox{#1}{\mbox{\ensuremath{\displaystyle #2}}}}
\title[B\MakeLowercase{oundedness in a chemotaxis model with gradient terms}] 
      {
\huge{A K\MakeLowercase{eller}--S\MakeLowercase{egel type taxis model with ecological interpretation and boundedness due to gradient nonlinearities}}}
\author[S\MakeLowercase{achiko} I\MakeLowercase{shida}, J\MakeLowercase{ohannes}  L\MakeLowercase{ankeit and}   G\MakeLowercase{iuseppe} V\MakeLowercase{iglialoro}]{}
\subjclass[2020]{Primary: 35K55. Secondary: 35A01, 35K15, 35K55, 35Q92, 92C17, 92D40, 92D50.}
\keywords{Chemotaxis, Ecology, Global existence, Boundedness, Gradient nonlinearities. \\
\textit{$^\sharp$Corresponding author}: lankeit@ifam.uni-hannover.de}
\begin{document}
\maketitle
\centerline{\LARGE S\MakeLowercase{achiko} I\MakeLowercase{shida}$^a$, J\MakeLowercase{ohannes} L\MakeLowercase{ankeit}$^{b,\sharp}$ \and G\MakeLowercase{iuseppe} V\MakeLowercase{iglialoro}$^{c}$}
\medskip
{   \small
 \medskip
\centerline{$^a$Department of Mathematics and Informatics, 
 Graduate School of Science, Chiba University}
\centerline{1-33, Yayoicho, Inage-ku, Chiba-shi, Chiba 263-8522 (Japan)}
 \medskip
 \centerline{$^b$Leibniz Universität Hannover,
 Institut für Angewandte Mathematik}
 \centerline{Welfengarten 1, 
30167 Hannover (Germany)}
\centerline{ORCID: 0000-0002-2563-7759}
 \medskip
 \centerline{$^c$Dipartimento di Matematica e Informatica,
Universit\`{a} di Cagliari}
 \centerline{Via Ospedale 72, 09124. Cagliari (Italy)}
 \centerline{ORCID: 0000-0002-2994-4123}
}
\bigskip
\begin{abstract}
We introduce a novel gradient-based damping term into a Keller--Segel type taxis model with motivation from  
ecology and consider the following system equipped with homogeneous Neumann-boundary conditions: 
\begin{equation}\label{problem_abstract}
\tag{$\Diamond$}
\begin{cases}
u_t= \Delta u - \chi \nabla \cdot (u \nabla v)+a u^\alpha-b u^\beta-c|\nabla u|^\gamma  & \text{ in } \Omega \times (0,T_{max}),\\
\tau v_t=\Delta v-v+u  & \text{ in } \Omega \times (0,T_{max}).\\
\end{cases}
\end{equation}
The problem is formulated in a bounded and smooth domain $\Omega$ of $\R^N$, with $N\geq 2$, for some  positive numbers $a,b,c,\chi>0$, $\tau \in \{0,1\}$, $\gamma\geq 1$, $\beta>\alpha\geq 1$ and with $\TM\in (0,\infty]$. As far as we know, Keller--Segel models with gradient-dependent sources are new in the literature and, accordingly, beyond giving a reasonable ecological interpretation the objective of the paper is twofold:  
\begin{enumerate}[label=\roman*)]
\item \label{Abs:Obj-1} to provide a rigorous analysis concerning the local existence and exensibility criterion for a class of models generalizing problem \eqref{problem_abstract}, obtained by replacing $a u^\alpha-b u^\beta-c|\nabla u|^\gamma$ with $f(u)-g(\nabla u)$; 
\item \label{Abs:Obj-2} to establish sufficient conditions on the data of problem \eqref{problem_abstract} itself, such that it admits a unique classical solution $(u,v)$, for $\TM=\infty$ and with both $u$ and $v$ bounded. 
\end{enumerate}
We handle \ref{Abs:Obj-1} whenever appropriately regular initial distributions $u(x,0)=u_0(x)\geq 0$, $\tau v(x,0)=\tau v_0(x)\geq 0$ are considered and $f$ and $g$ obey some regularity properties and, moreover, some growth restrictions. Further, as to \ref{Abs:Obj-2}, for the same initial data considered in the previous case, global  boundedness of solutions is proven for any $\tau\in \{0,1\}$, provided that  $\frac{2N}{N+1}<\gamma\leq 2$. 
\end{abstract}

\section{Introduction, main claims and organization of the paper}\label{Intro}
\subsection{Taxis terms in ecological models}
The original Keller--Segel model 
\begin{equation}\label{KS}
 u_t = Δu -∇\cdot(u∇v), \qquad v_t=Δv-v+u
\end{equation}
was introduced to describe the self-organized aggregation in a starving slime mould colony (see \cite{K-S-1970, BBTW}), where individual cells (density $u$) direct their otherwise random movement toward higher concentrations of a chemical signal substance they produce (concentration $v$). 
On much larger scales yet, this taxis system and its relatives find their use, for example in modelling the motion of predator populations in response to the concentration of their prey (or vice versa), see e.g. \cite{kareiva_odell}.

But also if the second species is not explicitly modelled (that is, also if the second equation remains of the form in \eqref{KS} and does not incorporate Lotka--Volterra type interactions), and even in the absence of a chemical signal, 
it is possible to understand the second component in \eqref{KS} as portrayal of the tendency of animals to have their motion influenced by their spatial memory \cite{fagan2013memory}. 

If attraction to (or, with a different sign, repulsion from) previously visited sites is incorporated in a heat equation model of random motion by means of a so-called weak memory kernel, it is precisely \eqref{KS} that results, \cite{shi_shi_wu_spatial_memory}.

\subsection{Interaction terms in equations of population dynamics}
Especially in a setting involved with population dynamics, that is, $u$ denoting the density of a biological species occupying a certain domain, the combination of \eqref{KS} with terms describing population growth or decay is very natural, for example logistic growth \cite{verhulst} (in the context of chemotaxis models, see \cite{AiTsEdYaMi,MIMURATsujikawa}). In this case, the temporal evolution of the distribution of the species is described as interplay between displacements (in the form of diffusion and directed motion), reproduction and death, with the death rate increasing for large populations, so that the zero-order terms in $u_t = Δu - ∇\cdot(u∇v) + f(u)$ may have the form $f(u)=au^{α}-bu^{β}$, where $α=1$ if the birth rate is constant ($a$) and where $β>1$. 

Nevertheless, not only the current size of a population has an influence on its growth; beyond such density-mediated effects, also its behaviour can have a significant impact. Since it is not unusual for predators to react to motion (e.g. \cite{bednarski2012optical} 
or 
\cite{maturana1960anatomy}), 
for the prey population, motility can increase the risk to be decimated by attacks; 
see \cite{geipel2020predation} 
for supporting evidence in the case of katydids, or 
\cite{visser2007motility,almeda2017behavior} for studies of zooplankton, whose mortality rate depends on the swimming velocity. 

In a macroscopic model, such effects appear as additional decay terms depending on the size of the gradient of the solution, i.e. of the gradient of the population density, so that we obtain $u_t = Δu - ∇\cdot(u∇v) + f(u) - g(∇u)$, where $g$ is an increasing function of $|∇u|$
. (Indeed, this coincides with the interpretation of gradient terms in scalar parabolic equations as 'accidental deaths' suggested by Souplet in \cite{SoupBio}.)
\subsection{The model}
In conclusion, we aim to study the signal-production chemotaxis model 
\begin{equation}\label{problem}
\begin{cases}
u_t= \Delta u - \chi \nabla \cdot (u \nabla v)+ a u^\alpha-bu^\beta-c |\nabla u|^\gamma & \text{ in } \Omega \times \left(0,T_{max}\right),\\
\tau v_t=\Delta v-v+ u  & \text{ in } \Omega \times \left(0,T_{max}\right),\\
∂_{ν} u=∂_{ν}v=0 & \text{ on } \partial \Omega \times \left(0,T_{max}\right),\\
u(x,0)=u_0(x), \; \tau v(x,0)=\tau v_0(x) & x \in \bar\Omega,
\end{cases}
\end{equation}
where $a,b,c,\chi>0$, $1\leq \alpha<\beta$, $\gamma \geq 1$, $\tau\in \{0,1\}$, and where the problem is posed in a bounded and smooth domain $\Omega$ of $\R^N$, with $N\geq 2$. Moreover, sufficiently  regular initial data  $u_0\geq 0$, $\tau v_0\geq 0$ are given, 
$\partial_{ν}$ indicates the outward normal derivative on $\partial \Omega$, whereas $T_{max}\in (0,\infty]$ is the maximal time up to which solutions to the system are defined. (For a local existence result and a characterization of the maximal existence time for a class of models generalizing \eqref{problem} see Theorem \ref{localSol} below.)

The parameter $\tau\in \{0,1\}$ distinguishes between the 'fully parabolic' chemotaxis system ($τ=1$) and its 'parabolic-elliptic' counterpart ($τ=0$). The latter is a frequent simplification of the model, which is helpful or necessary in certain proofs, and from a biological perspective can be justified whenever the spread of $v$ occurs on a much faster time scale than that of $u$.

\subsection{Analysis of the Keller--Segel system }
Mathematically, \eqref{KS} is known for its ability to admit global bounded solutions and unbounded solutions blowing up in finite time, depending on the total mass of initial data and the specific initial configurations. More precisely, for $N=1$ all solutions are uniformly bounded in time, whereas for $N\ge 3$ and any positive mass $m$, initial data with this mass $m=\io u_0$ that lead to blow-up within finite time have been detected. On the other hand, when $N=2$, the total mass $m$ separates the case where diffusion overcomes self-attraction (if $m<4\pi$, solutions are global in time) from the case where self-attraction dominates and initial data giving rise to finite-time blow-up exist. An overview can be found in the surveys \cite{lan_win_survey, BBTW}, and we mention \cite{HerreroVelazquez,Nagai,win_aggregation_vs} as some notable results in this context. 

\subsection{The Keller--Segel system with logistic source terms}
While mathematical intuition indicates that the presence of logistic terms (and thus of a superlinear damping effect) should benefit the boundedness of solutions (paralleling the boundedness of solutions to the ODE $u'=au-bu^{β}$ for $β>1$) and prevent the extreme structure formation in the sense of finite-time blow-up found in \eqref{KS}, this has only been shown for large values of $b$  (if $β=2$, see \cite{TelloWinkParEl}, \cite{W0}; and for weak solutions featuring eventual smoothness see \cite{lankeit_evsmooth}), whereas the possibility of blow-up was established for certain values of $\beta>1$, first for dimension $5$ or higher \cite{WinDespiteLogistic}, when the equation for $v$ reads $0=\Delta v-\frac{1}{|\Omega|}\int_\Omega u(x,t)dx+u$ (see also \cite{FuestCriticalNoDEA} for a recent improvement of \cite{WinDespiteLogistic}), but later also in 3-dimensional domains (with $0=\Delta v-v +u$ and $β<\f76$), \cite{Winkler_ZAMP-FiniteTimeLowDimension}, with extensions to more general systems with nonlinear diffusion and sensitivity functions, \cite{black_fuest_lankeit,tanaka}.

\subsection{Gradient nonlinearities}
The study of gradient terms in parabolic equations, and their impact on the possibility of blow-up is a classical topic
(see, e.g. \cite{chipot_weissler,kawohl_peletier,fila}, where the relations between exponents $γ$ and $α$ were investigated that can prevent or admit blow-up in the Dirichlet problem for a semilinear heat equation of the form $u_t=Δu - c|∇u|^{γ} + u^{α}$, or \cite[Sec. IV]{QS}). Additionally, a possible biological interpretation of these terms was given in the above-mentioned \cite{SoupBio}. Nevertheless, in chemotaxis models, gradient nonlinearities of this kind seem to have been treated only in \cite{ViglialoroDifferentialIntegralEquations}, where a lower bound of the maximal existence time of solutions was derived. 

\subsection{Presentation of the main results}
A general existence theory or any result on boundedness of solutions, however, seem to be lacking for Keller-Segel type taxis models with gradient terms. In particular motivated by the observation that some generalized logistic sources still permit blow-up (see above: \cite{WinDespiteLogistic,Winkler_ZAMP-FiniteTimeLowDimension,black_fuest_lankeit,tanaka,FuestCriticalNoDEA}) in such models, we aim at deriving sufficient conditions such that the combined actions of the reproductive growth ($+au^\alpha$) and the two degradation mechanisms ($-bu^\beta-c|\nabla u|^\gamma$) may guarantee their global existence. It is our goal to pose the condition on the strength of the `accidental death`, i.e. the gradient term, and not the (zero-order) decay by `natural death` that was part of earlier works. 

A natural question we want to deal with thus is:
\begin{enumerate}[label=$\mathcal{Q}_\arabic*$]
\item\label{Question}: For $\tau=c=0$, $\alpha=1$ and $\beta<2$ as in \cite{Winkler_ZAMP-FiniteTimeLowDimension}, let $u_0$ be such that the related solution to \eqref{problem} blows at finite time. Do some conditions on $\gamma\geq 1$ exist so to ensure that, for $c>0$ only, such blow-up scenario is prevented? 
\end{enumerate}
In order to engage with this question, we first require a rigorous analysis concerning the local existence of model \eqref{problem}, which was not available in the literature.
As  consequence, we firstly address questions connected to existence of (local) classical solutions and related extensibility criteria applicable to a wider context than that specifically formulated in system \eqref{problem}. In particular, once $\Omega$, $\chi>0 $, $\tau\in\set{0,1}$ are given as indicated above, we will deal with this problem:  
\begin{equation}\label{system}
 \begin{cases}
  u_t = Δu - \chi ∇\cdot(u∇v) + f(u) - g(∇u)& \text{ in } \Omega \times \left(0,T_{max}\right),\\
  τv_t = Δv - v + u & \text{ in } \Omega \times \left(0,T_{max}\right),\\
∂_{ν}u=∂_{ν}v=0 & \text{ on } \partial \Omega \times \left(0,T_{max}\right),\\
u(x,0)=u_0(x), \; \tau v(x,0)=\tau v_0(x) & x \in \bar\Omega.
 \end{cases}
\end{equation}
Here $f$ and $g$ will suitably generalize the terms of \eqref{problem} and, more precisely 
may have to fulfill these assumptions:
\begin{subnumcases}{\label{cond:fandg}}
f\colon ℝ→ℝ \textrm{ is locally Lipschitz continuous, with } f(0)\ge 0; \label{fAssTheo} \\
\textrm{ there is } C_f>0  \textrm{ such that } f(s)\leq C_f \quad \textrm{for all } \; s\geq 0;\label{ConstantForfThe}  \\ 
g\colon ℝ^N\to [0,∞) \textrm{ is locally Lipschitz continuous, with } g(0)= 0; \label{GlocallyLips} \\
\textrm{ there is } C_g>0 \textrm{ such that } g(z)\le C_g (1+|z|^2) \textrm{ for all } z\in ℝ^N. \label{ConstantForg} 
\end{subnumcases}
In these positions, we first prove the following 
\begin{theorem}[Local existences and extensibility criterion]\label{localSol}
 Let $N\in ℕ$, $\Omega\subset ℝ^N$ be a bounded domain with smooth boundary, $χ\in ℝ$, $δ>0$, $τ\in \{0,1\}$, let $f$ and $g$ comply with \eqref{cond:fandg}.
 For every $u_0\in C^{1+δ}(\Ombar)$, $τv_0\in C^{2+δ}(\Ombar)$ satisfying 
\begin{equation*}
  ∂_{ν}u_0\bdry=0,\tau∂_{ν}v_0\bdry=0 \mbox{ and } u_0\ge 0, τv_0\ge 0,
\end{equation*}
problem \eqref{system} has a unique and nonnegative classical solution 
\begin{equation*}
 (u,v) \in \kl{C^{1,\f{1}2}(\Ombar\times[0,\Tmax))\cap C^{2,1}(\Ombar\times(0,\Tmax))}\times C^{2,1}(\Ombar\times[0,\Tmax))
\end{equation*}
for some maximal $\Tmax\in(0,\infty]$ which is such that for every $p>\f N2$
\begin{equation}\label{extcrit}
   \text{either } \Tmax=\infty \quad \text{or}\quad \limsup_{t\upto \Tmax} \norm[\Lom p]{u(\cdot,t)} = \infty.
\end{equation}
\end{theorem}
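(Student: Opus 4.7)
The plan is to establish local existence by a Banach fixed-point argument in a space that controls $\nabla u$ pointwise, which is forced by the fact that $g$ may grow quadratically in $|\nabla u|$. For $R > \norm[C^1(\Ombar)]{u_0}$ fix smooth, bounded and globally Lipschitz extensions $\tilde f,\tilde g$ of $f|_{[-2R,2R]}$ and $g|_{\set{|z|\le 2R}}$, so that \eqref{cond:fandg} still holds and $\tilde f = f$, $\tilde g = g$ wherever $u$ and $\nabla u$ remain within $2R$ of $u_0$ and $\nabla u_0$. For $T>0$ to be fixed later, consider the closed subset
\begin{equation*}
 \mathcal{S}_{T,R} = \set{u\in C^0([0,T];C^1(\Ombar)) \colon u(\cdot,0) = u_0,\ \norm[C^0([0,T];C^1(\Ombar))]{u - u_0} \le R}
\end{equation*}
of $C^0([0,T];C^1(\Ombar))$, and define a map $\Phi$ on it as follows.

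For $u \in \mathcal{S}_{T,R}$ I would first solve the second equation for $v$: when $\tau=0$ this is the elliptic problem $-\Delta v + v = u$ with homogeneous Neumann condition treated pointwise in $t$, and when $\tau=1$ a linear inhomogeneous heat equation with datum $v_0$. Standard elliptic/parabolic Schauder or $L^p$-regularity theory yields $v\in C^0([0,T];C^2(\Ombar))$ with norms controlled by $\norm[C^0([0,T];C^1(\Ombar))]{u}$. Then set $\Phi(u) = w$ where $w$ solves
\begin{equation*}
 w_t - \Delta w = -\chi \nabla\cdot(u\nabla v) + \tilde f(u) - \tilde g(\nabla u) \text{ in } \Omega\times (0,T), \quad \partial_\nu w\bdry = 0, \quad w(\cdot,0) = u_0,
\end{equation*}
whose right-hand side is bounded in $C^0([0,T];C^{\delta/2}(\Ombar))$. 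Analytic-semigroup theory for $-\Delta+1$ with Neumann boundary conditions on the interpolation space $C^{1+\delta/2}(\Ombar)$ (or the equivalent parabolic Schauder estimates) produces a unique classical $w$ with estimates ensuring $\Phi(\mathcal{S}_{T,R})\subset\mathcal{S}_{T,R}$ and that $\Phi$ is a contraction, provided $T$ is chosen sufficiently small. Its unique fixed point $u$ satisfies $\norm[C^0([0,T];C^1(\Ombar))]{u-u_0} \le R$, so $\tilde f(u) = f(u)$ and $\tilde g(\nabla u) = g(\nabla u)$ pointwise, and $u$ therefore solves \eqref{system}; uniqueness follows from a variant of the contraction estimate, while standard continuation then yields a maximal existence time $\TM\in(0,\infty]$.

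Nonnegativity of $u$ comes from a pointwise maximum-principle argument: since $f(0)\ge 0$, $g\ge 0 = g(0)$, and $f$ is locally Lipschitz near $0$, testing the $u$-equation with $u_- := \min\set{u,0}$ and using $-g(\nabla u)\,u_- \le 0$ leads, via Gronwall, to $u_-\equiv 0$, while nonnegativity of $v$ is inherited from the linearity of its equation together with the nonnegativity of $u$ and $v_0$. As for the extensibility criterion \eqref{extcrit}, assume by contradiction that $\TM<\infty$ and $\norm[\Lom p]{u(\cdot,t)}$ stays bounded for some $p>\f N2$. Standard heat-semigroup estimates for the $v$-equation combined with $p>\f N2$ yield $\nabla v\in L^\infty(\Omega\times(0,\TM))$. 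Exploiting the favourable sign of $-g(\nabla u)$, a Moser-type iteration on $\io u^q$, $q\to\infty$, improves $L^p$ to $L^\infty$; a further regularity step for quasilinear parabolic equations with coefficients exhibiting quadratic gradient growth (in the spirit of Amann's theorem or Lieberman's Hölder estimates) upgrades $u$ to a uniform $C^{1+\delta'}$ bound up to $\TM$, so that the local existence argument, applied to initial data $u(\cdot,\TM-\varepsilon)$, extends the solution past $\TM$, contradicting maximality.

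The step I expect to be the main obstacle is this last extensibility argument: the quadratic gradient nonlinearity $g(\nabla u)$ is not a lower-order perturbation in the semigroup sense, and in the $L^q$-iteration it has to be used with the correct sign to dominate the cross-term generated by the chemotactic drift $-\chi\nabla\cdot(u\nabla v)$, so that despite its destabilising growth rate it in fact helps rather than hinders the passage from an $L^p$-bound to boundedness and $C^{1+\delta'}$-regularity.
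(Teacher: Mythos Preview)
Your overall strategy mirrors the paper's, though with different tools at each step: you use a Banach contraction where the paper applies Schauder's fixed point theorem (compactness coming from Lieberman's $C^{1+\delta}$ estimate for $\Phi(X_{R,T})$), and a Moser iteration for $L^p \Rightarrow L^\infty$ where the paper uses a direct Duhamel argument with heat-semigroup smoothing and the pointwise bound $u_t \le \Delta u - \chi\nabla\cdot(u\nabla v) + C_f$. Both routes work; the paper's choices are slightly more economical because Schauder avoids verifying a Lipschitz/contraction estimate for the composite map $u\mapsto v\mapsto \Phi(u)$, and Duhamel avoids recursive bookkeeping. Your $L^\infty \Rightarrow C^{1+\delta}$ step via Lieberman is exactly what the paper does, and this---not the $L^q$-iteration---is where the quadratic-growth hypothesis \eqref{ConstantForg} is genuinely essential (so your closing concern is slightly misplaced: in the $L^q$-testing the term $-g(\nabla u)u^{q-1}\le 0$ for $u\ge 0$ is simply discarded, not used to dominate the chemotactic cross-term, which is absorbed by the diffusion via Young and the bound on $\nabla v$).

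There is, however, a genuine sign error in the nonnegativity argument: since $g\ge 0$ and $u_-=\min\{u,0\}\le 0$, one has $-g(\nabla u)\,u_- \ge 0$, not $\le 0$, so this term enters $\frac{d}{dt}\int u_-^2$ with the wrong sign and cannot be dropped. The fix is either to use local Lipschitz continuity of $g$ together with $g(0)=0$ and the a-priori $C^1$ bound on $u$ to write $g(\nabla u)\le L|\nabla u|$, whence $-\int g(\nabla u)u_- \le \tfrac{1}{2}\int|\nabla u_-|^2 + \tfrac{L^2}{2}\int u_-^2$ is absorbable and Gronwallable; or, as the paper does, to observe directly that $0$ is a subsolution (since $f(0)\ge 0$ and $g(0)=0$) and invoke the classical comparison principle.
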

With the aid of the above result, we can obtain some more information on solutions to model \eqref{system} if more restrictive assumptions on some data are required. Precisely, we have this further
\begin{theorem}\label{MainTheorem}
Let $N\geq 2$, $\Omega\subset ℝ^N$ be a bounded domain with smooth boundary, $f$ and $g$ comply with \eqref{cond:fandg}, 
$χ> 0$ and $\tau\in \{0,1\}$. Then, for every $u_0\in C^{1+δ}(\Ombar)$, $τv_0\in C^{2+δ}(\Ombar)$ satisfying 
\begin{equation*}
∂_{ν}u_0\bdry=0, \tau∂_{ν}v_0\bdry=0\mbox{ and } u_0\ge 0,\ τv_0\ge 0,
\end{equation*}
whenever 
\begin{equation}\label{ConditionsOnGammaTheo}
 \frac{2N}{N+1} <\gamma \leq 2,
\end{equation}
and 
there are $C_1,C_2,C_3>0$ such that $f$ and $g$ also comply with
\begin{equation}\label{assumptionsCorollary}  
f(s)\leq C_1-C_2s\; \textrm{ for all } s\geq 0 \quad   \textrm{and} \quad   g(z)\geq C_3 |z|^\gamma  \quad \textrm{ for all } z\in \R^N,
\end{equation}
problem \eqref{system} has a unique and nonnegative classical solution 
\begin{align*}
 (u,v) \in& \kl{C^{1,\f{1}2}(\Ombar\times[0,\infty))\cap C^{2,1}(\Ombar\times(0,\infty))\cap L^\infty(\Omega \times (0,\infty))}\\ &\times \kl{C^{2,1}(\Ombar\times[0,\infty))\cap L^\infty(\Omega \times (0,\infty))}, 
\end{align*}
which, in particular, is bounded in the sense that there exists $C>0$ such that 
\[
\|u(\cdot,t)\|_{L^\infty(\Omega)}+\|v(\cdot,t)\|_{W^{1,\infty}(\Omega)}\leq C \quad \textrm{for all } t\geq 0.
\]
\end{theorem}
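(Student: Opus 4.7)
The plan is to invoke Theorem \ref{localSol} to get a local classical nonnegative solution $(u,v)$ on $[0,\Tmax)$, and then to bound $\norm[\Lom p]{u(\cdot,t)}$ uniformly in $t\in[0,\Tmax)$ for some $p>\f{N}{2}$; the extensibility criterion \eqref{extcrit} will then force $\Tmax=\infty$. As a starting point, integrating the first equation over $\Om$ and using \eqref{assumptionsCorollary} together with $g\ge 0$ and the no-flux boundary condition, one immediately obtains $\f{d}{dt}\io u \le C_1|\Om|-C_2\io u$, so that $\norm[\Lom 1]{u(\cdot,t)}$ is uniformly bounded in time.

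\emph{The core $L^p$-estimate.} Multiplying the first equation in \eqref{system} by $u^{p-1}$ for $p>1$ (to be fixed later) and integrating by parts one gets
\begin{equation*}
\f{1}{p}\f{d}{dt}\io u^p + (p-1)\io u^{p-2}|\nabla u|^2 + \io u^{p-1}g(\nabla u) = \chi(p-1)\io u^{p-1}\nabla u\cdot\nabla v + \io u^{p-1}f(u),
\end{equation*}
where \eqref{assumptionsCorollary} entails $\io u^{p-1}f(u)\le C_1\io u^{p-1}-C_2\io u^p$ and $\io u^{p-1}g(\nabla u)\ge C_3\io u^{p-1}|\nabla u|^\gamma$. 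Setting $s=\f{p-1+\gamma}{\gamma}$ and $w=u^s$, one has $|\nabla w|^\gamma=s^\gamma u^{p-1}|\nabla u|^\gamma$ and $\norm[\Lom{1/s}]{w}^{s}=\norm[\Lom 1]{u}$, so the Gagliardo--Nirenberg inequality applied to $w$ provides
\begin{equation*}
\norm[\Lom{(p+1)/s}]{w}^{(p+1)/s} \le C\norm[\Lom\gamma]{\nabla w}^{\theta(p+1)/s}\norm[\Lom{1/s}]{w}^{(1-\theta)(p+1)/s} + C\norm[\Lom{1/s}]{w}^{(p+1)/s}
\end{equation*}
with interpolation exponent $\theta\in(0,1)$ which, by a direct computation, satisfies $\theta(p+1)/s<\gamma$ precisely when $\gamma>\f{2N}{N+1}$. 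Young's inequality then translates this into
\begin{equation*}
\io u^{p+1}\le \varepsilon \io u^{p-1}|\nabla u|^\gamma + C_\varepsilon \quad\text{for every }\varepsilon>0,
\end{equation*}
and this is the `coercivity' produced by the gradient damping term $g$ under \eqref{ConditionsOnGammaTheo}.

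\emph{Handling the chemotactic cross term.} In the parabolic-elliptic case $\tau=0$, integration by parts together with the identity $\Delta v=v-u$ gives $\chi(p-1)\io u^{p-1}\nabla u\cdot\nabla v=-\chi\f{p-1}{p}\io u^p\Delta v\le \chi\f{p-1}{p}\io u^{p+1}$ because $v\ge 0$, and the previous step absorbs this into the dissipation after choosing $\varepsilon$ small. In the fully parabolic case $\tau=1$, Young's inequality with exponents $\gamma$ and $\gamma/(\gamma-1)$ allows the splitting
\begin{equation*}
\chi(p-1)\io u^{p-1}|\nabla u||\nabla v| \le \f{C_3}{2}\io u^{p-1}|\nabla u|^\gamma + C\io u^{p-1}|\nabla v|^{\gamma/(\gamma-1)},
\end{equation*}
so the first summand is again absorbed by the dissipation, while the second is controlled by a bootstrap making use of parabolic regularity for $v_t-\Delta v+v=u$, bounding it up to lower-order terms by $\io u^{p+1}$ which in turn can be handled through the same Gagliardo--Nirenberg absorption as above. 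Either way one arrives at a differential inequality of the shape
\[
\f{d}{dt}\io u^p + C\io u^p \le K
\]
for suitable $C,K>0$ depending on $p$, and hence $\norm[\Lom p]{u(\cdot,t)}$ is uniformly bounded in time, in particular for some $p>\f{N}{2}$ since $N\ge 2$.

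\emph{Conclusion and main obstacle.} The extensibility criterion \eqref{extcrit} now rules out finite-time blow-up, so $\Tmax=\infty$. Elliptic (if $\tau=0$) or parabolic (if $\tau=1$) regularity theory applied to the second equation together with the Sobolev embedding delivers the $W^{1,\infty}$-bound on $v$, and a standard Moser-type iteration (or a further bootstrap in the exponent $p$) upgrades the uniform $L^p$-bound on $u$ to the stated $L^\infty(\Om\times(0,\infty))$-bound. I expect the main technical difficulty to lie in the fully parabolic case $\tau=1$, where the cross term cannot be traded directly for $\io u^{p+1}$ without producing an awkward temporal derivative of $v$, so that the Young splitting and the parabolic-regularity bootstrap must be carefully calibrated to remain strictly below the dissipation $C_3\io u^{p-1}|\nabla u|^\gamma$; this is the exact place at which the strict inequality $\gamma>\f{2N}{N+1}$ enters through Gagliardo--Nirenberg, whereas the upper bound $\gamma\le 2$ ensures compatibility with the quadratic growth allowance \eqref{ConstantForg} presupposed in Theorem \ref{localSol}.
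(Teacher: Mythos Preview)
Your overall architecture---mass bound, $L^p$ testing, absorption of $\io u^{p+1}$ by the gradient dissipation $\io u^{p-1}|\nabla u|^\gamma$ via Gagliardo--Nirenberg (which is precisely where $\gamma>\f{2N}{N+1}$ enters), and the extensibility criterion---matches the paper exactly, and your treatment of the parabolic--elliptic case $\tau=0$ is essentially identical to the paper's.

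The gap is in the fully parabolic case $\tau=1$. Your Young splitting
\[
\chi(p-1)\io u^{p-1}|\nabla u||\nabla v|\le \f{C_3}{2}\io u^{p-1}|\nabla u|^\gamma + C\io u^{p-1}|\nabla v|^{\gamma/(\gamma-1)}
\]
leaves you with $\io u^{p-1}|\nabla v|^{\gamma/(\gamma-1)}$, and your assertion that ``parabolic regularity'' bounds this by $\io u^{p+1}$ up to lower order, leading to a \emph{pointwise-in-time} ODE $\f{d}{dt}\io u^p + C\io u^p\le K$, is not substantiated: any regularity information on $\nabla v(\cdot,t)$ coming from $v_t-\Delta v+v=u$ depends on $u$ on the whole interval $[0,t]$, not just at time $t$, so the inequality does not close as a differential inequality for $\io u^p$ alone. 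The paper avoids this by first integrating by parts to produce $-\chi(p-1)\io u^p\Delta v$, then using Young with exponents $\f{p+1}{p}$ and $p+1$ to obtain $\io|\Delta v|^{p+1}$, and finally invoking \emph{Maximal Sobolev Regularity} after multiplying by $e^t$ and integrating in time: one gets
\[
\int_0^t e^s\io|\Delta v|^{p+1}\,ds \le C_{MR}\Bigl(1+\int_0^t e^s\io u^{p+1}\,ds\Bigr),
\]
so that the resulting inequality is an \emph{integral} bound $e^t\io u^p\le c+\int_0^t e^s[c'\io u^{p+1}-(\text{gradient term})]\,ds$, in which the Gagliardo--Nirenberg absorption is applied under the time integral. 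No pointwise ODE of the form you claim is actually obtained for $\tau=1$. This is the specific mechanism your sketch is missing.
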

As a by-product, we can specialize Theorem \ref{MainTheorem} to situations where $f$ and $g$ are external sources exactly reading as in model \eqref{problem}, and thus answer \ref{Question}.
\begin{corollary}\label{Corollary}
Let $N\geq 2$, $\Omega\subset ℝ^N$ be a bounded domain with smooth boundary, $a,b,c,\chi,\delta>0$, $\tau\in \{0,1\}$ and $1\leq \alpha<\beta$. Additionally, let 
relation \eqref{ConditionsOnGammaTheo} hold. 
Then, for every  $u_0\in C^{1+δ}(\Ombar)$, $τv_0\in C^{2+δ}(\Ombar)$ satisfying 
\begin{equation*}
  ∂_{ν}u_0\bdry=0, \tau∂_{ν}v_0\bdry=0\mbox{ and } u_0\ge 0, τv_0\ge 0,
\end{equation*}
problem \eqref{problem} has a unique and nonnegative classical solution, which is bounded.
\end{corollary}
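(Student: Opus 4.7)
The plan is to reduce the corollary to an application of Theorem \ref{MainTheorem}. Specifically, identifying $f(s) = as^{\alpha}-bs^{\beta}$ (extended to all of $\mathbb R$ by setting $f(s)=0$ for $s<0$, which preserves local Lipschitz continuity at $0$ since $\alpha\ge 1$) and $g(z)=c|z|^{\gamma}$, I will check in turn every hypothesis required by that theorem, namely \eqref{fAssTheo}--\eqref{ConstantForg} together with \eqref{assumptionsCorollary}, and then invoke Theorem \ref{MainTheorem} directly. The assumption \eqref{ConditionsOnGammaTheo} on $\gamma$ is already part of the hypotheses of the corollary.

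The verifications for $g$ are essentially by inspection: $g$ is nonnegative with $g(0)=0$, it is locally Lipschitz on $\mathbb R^{N}$ since $\gamma\ge 1$, the estimate $g(z)\ge C_{3}|z|^{\gamma}$ holds with $C_{3}=c$, and the upper bound $g(z)\le C_{g}(1+|z|^{2})$ follows from $|z|^{\gamma}\le 1+|z|^{2}$ on $\mathbb R^{N}$ whenever $\gamma\le 2$, so $C_{g}=c$ suffices. As for $f$, local Lipschitz continuity and $f(0)=0$ are immediate, and the uniform bound $f(s)\le C_{f}$ on $[0,\infty)$ is obtained by elementary calculus: since $\beta>\alpha\ge 1$, the function $s\mapsto as^{\alpha}-bs^{\beta}$ attains a finite maximum on $[0,\infty)$.

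The step that requires a (very small) argument is the first half of \eqref{assumptionsCorollary}, namely the linear upper bound $f(s)\le C_{1}-C_{2}s$ on $[0,\infty)$. For this I fix any $C_{2}>0$ and apply Young's inequality twice: since $\beta>\alpha\ge 1$ and $\beta>1$, one has
\begin{equation*}
as^{\alpha}\le \tfrac{b}{2}s^{\beta} + K_{1}(a,b,\alpha,\beta)\quad\text{and}\quad C_{2}s\le \tfrac{b}{2}s^{\beta}+K_{2}(C_{2},b,\beta)\qquad\text{for all } s\ge 0,
\end{equation*}
so that $f(s)+C_{2}s\le K_{1}+K_{2}=:C_{1}$ for all $s\ge 0$, as required. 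With all hypotheses verified, Theorem \ref{MainTheorem} immediately yields the unique nonnegative bounded global classical solution claimed.

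I do not anticipate any genuine obstacle: the content of the corollary is precisely that the concrete $(f,g)$ appearing in \eqref{problem} falls within the abstract framework already handled in Theorem \ref{MainTheorem}, and the only non-tautological verification is the Young-type estimate above, which is completely routine.
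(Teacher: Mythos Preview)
Your proposal is correct and follows essentially the same route as the paper's own proof: both reduce the corollary to Theorem~\ref{MainTheorem} by verifying that the concrete choices $f(s)=as^{\alpha}-bs^{\beta}$ and $g(z)=c|z|^{\gamma}$ satisfy \eqref{cond:fandg} and \eqref{assumptionsCorollary}. The only cosmetic difference is that the paper obtains the linear upper bound on $f$ via the observation $\lim_{s\to\infty}(f(s)+C_{2}s)=-\infty$, whereas you use two applications of Young's inequality; both arguments are equally elementary and lead to the same conclusion.
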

\begin{remark}[Open question]
While the extensibility criterion \eqref{extcrit} gives some indication \textit{how} blow-up would occur (and, in particular, excludes pure gradient blow-up), 
and while there are conditions on $f$ that prevent blow-up even for $g\equiv 0$, we here have to leave open the question whether $g$ alone can prevent unboundedness for small exponents, i.e. whether for some $\gamma  \in \left[1,\frac{2N}{N+1}\right]$ there may exist unbounded solutions to problem \eqref{problem} emanating from certain data. 
\end{remark}

\section{Existence of local-in-time solutions and extensibility criterion. Proof of Theorem \ref{localSol}}\label{LocalSolSection}
While $C_f,C_g,C_1,C_2, C_3$ will always refer to the constants in \eqref{cond:fandg} and \eqref{assumptionsCorollary}, other constants, and in particular the small-letter constants $c_i$, are local to each proof.

The proof of Theorem \ref{localSol} is consequence of this series of lemmas. Until we confirm -- in Lemma~\ref{lem:ex-part2} -- the $C^{2,1}$-regularity of solutions, showing that they are, indeed, classical, ``solution of \eqref{system}'' occasionally must be understood as solution of the weak form of \eqref{system}. The current regularity of $(u,v)$ will always be indicated.
\begin{lemma}\label{lem:ex-part1}
Let $N\in ℕ$, $\Omega\subset ℝ^N$ be a bounded domain with smooth boundary, $δ>0$, $χ\in ℝ$, $τ\in \{0,1\}$, let $f\colon ℝ→ℝ$ and $g\colon ℝ^N\to [0,∞)$ be continuous. 
For every $M_1>0$ and $M_2>0$ there is $T>0$ such that system \eqref{system} for every $u_0\in C^{1+δ}(\Ombar)$, $τv_0\in C^{2+δ}(\Ombar)$ satisfying 
\begin{equation}\label{ex-cond:u0v0}
 \norm[C^{1+δ}(\Ombar)]{u_0}\le M_1, \norm[C^{1+δ}(\Ombar)]{τv_0}\le M_2
 \text{ and  }  ∂_{ν}u_0\bdry=0, 
 \tau∂_{ν}v_0\bdry=0,
\end{equation}
has a solution 
\[
 (u,v)\in C^{1+δ,\f{1+δ}2}(\Ombar\times[0,T])\times C^{2+δ,1+\f{δ}2}(\Ombar\times[0,T]).
\]
\end{lemma}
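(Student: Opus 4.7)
I would prove the lemma by a Schauder fixed-point argument on a short time interval $[0,T]$, with $T$ depending only on $M_1$, $M_2$ (and $\chi,\tau,\Omega,\delta,f,g$). Fix $\delta'\in(0,\delta)$, let $R>0$ and $T\in(0,1]$ be chosen later, and consider
\[
 X_T := \set{\bar u\in C^{1+\delta,(1+\delta)/2}(\Ombar\times[0,T]):\ \normm{C^{1+\delta,(1+\delta)/2}}{\bar u}\le R,\ \bar u(\cdot,0)=u_0,\ ∂_ν \bar u\bdry=0},
\]
regarded as a closed, bounded, convex subset of the Banach space $Y:=C^{1+\delta',(1+\delta')/2}(\Ombar\times[0,T])$; by Arzelà-Ascoli, $X_T$ is relatively compact in $Y$.

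Given $\bar u\in X_T$, I would build the map $\Phi:X_T\to Y$ in two stages. First, solve the linear $v$-problem $τ v_t-\Delta v+v=\bar u$ (elliptic if $τ=0$, parabolic if $τ=1$) with Neumann data and initial datum $τ v_0$, obtaining $v\in C^{2+\delta,1+\delta/2}(\Ombar\times[0,T])$ with norm bounded by $c_1=c_1(R,M_2)$ uniformly in $T\le 1$ via classical Schauder theory. Next, define $u:=\Phi(\bar u)$ as the solution of the linear parabolic problem
\begin{equation*}
 u_t-\Delta u+\chi \nabla\cdot(\bar u\nabla v)=f(\bar u)-g(\nabla \bar u),\qquad ∂_ν u\bdry=0,\qquad u(\cdot,0)=u_0.
\end{equation*}
Here the right-hand side is continuous and uniformly bounded, since $f$ and $g$ are continuous hence bounded on the compact ranges of $\bar u$ and $\nabla \bar u$, while the drift coefficient $\chi\bar u\nabla v$ lies in $C^{\delta,\delta/2}$ with a bound depending only on $R$ and $M_2$. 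Parabolic $L^q$-theory (for $q$ so large that $W^{2,1}_q\hookrightarrow C^{1+\delta,(1+\delta)/2}$), or equivalently Schauder theory for divergence-form equations followed by Hölder embedding, then yields $u\in C^{1+\delta,(1+\delta)/2}(\Ombar\times[0,T])$.

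To ensure $\Phi(X_T)\subset X_T$, I would split $u=e^{t\Delta_N}u_0+\Psi$ with $e^{t\Delta_N}$ the Neumann heat semigroup: $\normm{C^{1+\delta,(1+\delta)/2}}{e^{t\Delta_N}u_0}\le c_2(M_1)$ uniformly in $T\le 1$, while short-time smoothing estimates for $\Psi$ driven by the bounded inhomogeneity and the $C^{\delta,\delta/2}$-drift give $\normm{C^{1+\delta,(1+\delta)/2}}{\Psi}\le c_3(R,M_1,M_2)T^{σ}$ for some $σ>0$. Choosing $R:=2c_2(M_1)$ and then $T$ small enough in terms of $M_1,M_2$ yields self-mapping. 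Continuity of $\Phi$ in the $Y$-topology follows from continuous dependence for the two linear problems and from uniform continuity of $f,g$ on compacta. Schauder's fixed-point theorem then produces a fixed point $u\in X_T$, which together with the associated $v$ is the desired solution.

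The main obstacle is the careful bookkeeping of norms needed to make $T$ depend only on $M_1,M_2$ and to secure the $T^{σ}$-smallness of $\Psi$ in $C^{1+\delta,(1+\delta)/2}$ (rather than merely in some weaker Hölder norm), in a way compatible with Neumann compatibility of $u_0$. Note that the mere continuity -- not Lipschitz continuity -- of $f$ and $g$ in this lemma forces the use of a compactness-based fixed-point theorem rather than a Banach contraction.
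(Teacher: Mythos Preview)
Your outline is correct and follows the same overall strategy as the paper---a Schauder fixed-point argument in which one freezes $\bar u$, solves for $v$, then solves a linear parabolic problem for $u$---but differs in two technical choices that are worth noting. First, the paper takes the simpler ambient space $C^1(\Ombar\times[0,T])$ (with $X_{R,T}=\{\bar u:\sup_t\|\bar u(\cdot,t)\|_{C^1}\le R\}$) rather than your $C^{1+\delta',(1+\delta')/2}$; compactness of $\Phi(X_{R,T})$ then comes from the a~posteriori bound $\|u\|_{C^{1+\delta,(1+\delta)/2}}\le c_3$ (via Lieberman's divergence-form gradient H\"older estimate), and self-mapping is obtained in one line from $\|u(\cdot,t)-u_0\|_{C^1}\le c_3 T^{\delta}$, avoiding precisely the ``careful bookkeeping'' for $T^{\sigma}$-smallness in $C^{1+\delta}$ that you flag as the main obstacle. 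Second, the paper keeps the unknown $u$ (not $\bar u$) inside the cross-diffusion term, i.e.\ solves $u_t=\Delta u-\chi\nabla\cdot(u\nabla v)+f(\bar u)-g(\nabla\bar u)$ as a linear drift-diffusion equation with bounded $\nabla v$; this makes the $L^\infty$-bound for $u$ a direct semigroup comparison and feeds cleanly into Lieberman's theorem. Your variant with $\bar u$ in the taxis term reduces the $u$-problem to a heat equation with bounded source and works via $W^{2,1}_q$-theory and embedding, which is fine but slightly heavier. Either route succeeds; the paper's choices simply make the self-mapping step shorter.
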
 
\begin{proof}
We fix $T_0>0$. For $R>M_1+1$ and $T\in(0,T_0]$ (to be specified later) we let 
\[
 X_{R,T} = \set{u\in C^1(\Ombar\times[0,T])\mid \sup_{t\in[0,T]}\norm[C^1(\Ombar)]{u(\cdot,t)}\le R} 
\]
which is a closed bounded convex subset of $C^1(\Ombar\times[0,T])$. For $\ubar\in X_{R,T}$ we let $v$ be the solution of 
\begin{equation}\label{ex:v-eq}
 τv_t = Δv - v + \ubar,\;\; ∂_{ν} v = 0,\;\; τv(\cdot,0)=τv_0
\end{equation}
(whose existence as classical solution and uniqueness follow from, e.g., \cite[IV.5.3]{LSU} if $τ=1$ and from \cite[6.31]{GT} if $τ=0$), and define $Φ(\ubar)=u\in L^2((0,T);W^{1,2}(\Om))\cap C([0,T];\Lom2)$ to be the weak solution of 
\begin{equation}\label{ex:u-eq}
 u_t = Δu - \chi ∇\cdot(u∇v) + f(\ubar) - g(∇\ubar),\;\;  ∂_{ν} u = 0\;\;  u(\cdot,0)=u_0
\end{equation}
(whose existence follows by \cite[III, §5]{LSU}, uniqueness due to \cite[III.3.1]{LSU}). 

There is $c_1=c_1(M_2,R)>0$ such that every solution of \eqref{ex:v-eq} (for any $\ubar \in X_{R,T}$ and $v_0$ with \eqref{ex-cond:u0v0}) satisfies 
\[
 \sup_{t\in[0,T]}\norm[C^{1+δ}(\Ombar)]{v(\cdot,t)}\le c_1
\]
(cf. \cite[Thm.~1.2]{lieberman_paper} if $τ=1$ or \cite[Thm 6.30]{GT} if $τ=0$) and there is $c_2=c_2(M_1,R,c_1, f, T_0)>0$ such that (for any $u_0$ as in \eqref{ex-cond:u0v0} and any $\ubar \in X_{R,T}$) the solution $u$ of \eqref{ex:u-eq} satisfies
\[
 \norm[L^\infty(\Om\times(0,T_0))]{u}\le c_2, 
\]
because from 
\begin{align*}
u_t &\le Δu - χ\nabla\cdot(uh) + \sup_{s\in[-R,R]} f(s)&&\text{ and } &
u_t &\ge Δu - χ\nabla\cdot(uh) + \inf_{s\in[-R,R]}f(s) - \sup_{z\in B_R(0)} g(z) 
\end{align*}
with the bounded function $h=∇v$, we obtain upper and lower bounds for $u$ from semigroup estimates like \cite[Lemma~1.3(iv)]{win_aggregation_vs}.
Having ensured 
boundedness of $u$, we can now employ \cite[Thm.~1.2]{lieberman_paper} to see that there is 
$c_3>0$ such that (for any $u_0$ as in \eqref{ex-cond:u0v0} and any $\ubar \in X_{R,T}$) the solution $u$ of \eqref{ex:u-eq} satisfies
\[
 \normm{C^{1+δ,\f{1+δ}2}(\Ombar\times[0,T])}{u}\le c_3. 
\]
Consequently, 
\[
 \norm[C^1(\Ombar)]{u(\cdot,t)} \le \norm[C^1(\Ombar)]{u_0} + \norm[C^1(\Ombar)]{u(\cdot,t)-u_0} \le M_1 + c_3 T^{δ}, 
\]
which shows that for $T<c_3^{-\f1{δ}}$, $Φ$ maps $X_{R,T}$ to $X_{R,T}$. Moreover, $Φ(X_{R,T})$ is bounded in $C^{1+δ,\f{1+δ}2}(\Ombar\times[0,T])$, hence compact in $C^1(\Ombar\times[0,T])$, and $\Phi$ is continuous. 
Schauder's fixed point theorem asserts the existence of a fixed point $u\in Φ(X_{R,T})$ of $Φ$.  
\end{proof}

\begin{lemma}\label{LemmaSignUandV}
 In addition to the assumptions of Lemma~\ref{lem:ex-part1}, assume $u_0\ge 0$ and let $f$ and $g$ fulfill \eqref{fAssTheo} and \eqref{GlocallyLips}. Then $u\ge 0$.
\end{lemma}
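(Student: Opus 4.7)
The plan is to test the weak form of the $u$-equation by the truncation $-u_-$, where $u_-:=\max\{-u,0\}\ge 0$, and derive a Gronwall inequality for $\int_{\Omega} u_-^2$. The sign assumptions $f(0)\ge 0$ and $g(0)=0$ together with local Lipschitz continuity are tailored exactly to this test function.

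First I would observe that, since the fixed point $u$ from Lemma~\ref{lem:ex-part1} lies in $C^{1+δ,\frac{1+δ}2}(\Ombar\times[0,T])$ and $v$ in $C^{2+δ,1+\frac{δ}2}(\Ombar\times[0,T])$, both $u$ and $\nabla u$ stay in a fixed compact subset of $\R\times\R^N$ on $\Ombar\times[0,T]$, and $\Delta v$ and $\nabla v$ are bounded. On this compact set, local Lipschitz constants $L_f,L_g$ of $f$ and $g$ are available. Combined with $f(0)\ge 0$, this yields $f(u)\ge -L_f u_-$ pointwise; combined with $g(0)=0$, it yields $0\le g(\nabla u)\le L_g|\nabla u|$, and on the set $\{u<0\}$ one has $\nabla u=-\nabla u_-$, hence $g(\nabla u)\le L_g|\nabla u_-|$ there.

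Testing the equation by $-u_-$ (which belongs to $L^2((0,T);W^{1,2}(\Omega))$ as a Lipschitz composition of $u$) and using $\nabla u_-=-\mathbf{1}_{\{u<0\}}\nabla u$, the Neumann boundary terms vanish and the chemotactic term rewrites as $\frac{\chi}{2}\int_{\Omega}\nabla u_-^2\cdot\nabla v=-\frac{\chi}{2}\int_{\Omega}u_-^2\Delta v$, so that
\[
\frac12\frac{d}{dt}\int_{\Omega}u_-^2+\int_{\Omega}|\nabla u_-|^2
=-\frac{\chi}{2}\int_{\Omega}u_-^2\,\Delta v-\int_{\Omega}u_-f(u)+\int_{\Omega}u_-g(\nabla u).
\]
Boundedness of $\Delta v$ controls the first term by $c\int_{\Omega}u_-^2$, and the pointwise bound on $f$ gives $-\int u_-f(u)\le L_f\int u_-^2$. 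For the gradient term, which has the "wrong" sign, Young's inequality $L_g u_-|\nabla u_-|\le\frac12|\nabla u_-|^2+\frac{L_g^2}{2}u_-^2$ absorbs the $|\nabla u_-|^2$ part into the left-hand side. Altogether this yields $\frac{d}{dt}\int_{\Omega}u_-^2\le C\int_{\Omega}u_-^2$, and since $u_0\ge 0$ forces $u_-(\cdot,0)\equiv 0$, Gronwall's inequality concludes $u_-\equiv 0$, i.e. $u\ge 0$.

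The main obstacle is the handling of $g(\nabla u)$: since $g\ge 0$ appears on the right-hand side with the unfavourable sign, it cannot simply be dropped. The two ingredients that save the argument are $g(0)=0$ (so that $g(\nabla u)$ is proportional to $|\nabla u|$, not merely bounded) and the identity $\nabla u=-\nabla u_-$ on $\{u<0\}$, which converts that factor into $|\nabla u_-|$ and makes it amenable to absorption.
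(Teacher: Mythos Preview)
Your argument is correct. The energy method with the Stampacchia test function $-u_-$ works exactly as you describe; the key points (absorbing $g(\nabla u)$ via $g(0)=0$ and Young's inequality, rewriting the taxis term with $u\nabla u_-=-u_-\nabla u_-$, and bounding by $\Delta v\in L^\infty$) are all sound. One tiny imprecision: the pointwise claim $f(u)\ge -L_f u_-$ would literally require $f(u)\ge 0$ on $\{u>0\}$, which is false in general; what you actually use---and what is true---is $-u_-f(u)\le L_f u_-^2$, since $u_-$ vanishes where $u\ge 0$.

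The paper takes a different and shorter route: it simply observes that $\underline{u}\equiv 0$ is a subsolution of the first equation (with $v$ frozen as a known $C^2$-bounded coefficient), because $f(0)\ge 0$ and $g(0)=0$, and then invokes a classical comparison principle for quasilinear parabolic equations (Quittner--Souplet, Proposition~52.6). That approach dispatches the lemma in one line at the cost of an external citation; your approach is more hands-on and entirely self-contained, and has the advantage that it works at the weak-solution regularity level furnished by Lemma~\ref{lem:ex-part1} without any appeal to a comparison result whose hypotheses one would otherwise need to verify. In effect you have reproved, for this particular equation, the comparison with the zero subsolution via the standard $L^2$ mechanism.
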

\begin{proof}
 Since $0$ is a subsolution due to $f(0)\ge 0$ and $g(0)=0$, this follows from a classical comparison principle (\cite[52.6]{QS}), applied to the first equation of \eqref{system} for fixed (and $C^2(\Ombar)$-bounded) $v$. 
\end{proof}

\begin{lemma}\label{lem:unique}
Let $\Omega$, $τ$, $χ$, $u_0$, $τv_0$, $f$, $g$ be given as in Lemma~\ref{lem:ex-part1}, and additionally let $f$ and $g$ be locally Lipschitz continuous.
Let $(u_1,v_1), (u_2,v_2) \in C^0(\Ombar\times[0,T])$ satisfy $∇u_1,∇u_2,∇v_1,∇v_2\in C^0(\Ombar\times[0,T])$, $(u_1)_t, (u_2)_t\in C^0(\Ombar\times[0,T])$ (and, if $τ=1$, also $(v_1)_t, (v_2)_t \in C^0(\Ombar\times[0,T])$) and be solutions of \eqref{system}. 
Then $(u_1,v_1)=(u_2,v_2)$ in $\Ombar\times[0,T]$.
\end{lemma}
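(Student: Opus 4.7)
I would argue by an energy method combined with Gronwall's lemma. Set $w:=u_1-u_2$ and $z:=v_1-v_2$; these vanish at $t=0$ and, thanks to the regularity hypotheses, the classical equations
\begin{align*}
 w_t &= \Delta w - \chi \nabla\cdot\bigl(w\,\nabla v_1 + u_2\,\nabla z\bigr) + \bigl(f(u_1)-f(u_2)\bigr) - \bigl(g(\nabla u_1)-g(\nabla u_2)\bigr),\\
 \tau z_t &= \Delta z - z + w
\end{align*}
hold pointwise in $\Omega\times(0,T]$, with homogeneous Neumann boundary data. Since $u_1,u_2,\nabla u_1,\nabla u_2,\nabla v_1,\nabla v_2$ are continuous on the compactum $\Ombar\times[0,T]$, they are uniformly bounded by some constant $M$, and then the local Lipschitz assumptions on $f$ and $g$ provide a single Lipschitz constant $L>0$ valid on the relevant ranges.

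The main estimate is obtained by testing the $w$-equation with $w$: integration by parts yields
\begin{equation*}
 \tfrac12\tfrac{d}{dt}\io w^2 + \io |\nabla w|^2 = \chi\io (w\,\nabla v_1 + u_2\,\nabla z)\cdot\nabla w + \io\bigl(f(u_1)-f(u_2)\bigr)w - \io\bigl(g(\nabla u_1)-g(\nabla u_2)\bigr)w.
\end{equation*}
Using $|\nabla v_1|\le M$, $|u_2|\le M$ together with the Lipschitz bounds, three applications of Young's inequality absorb the $\nabla w$ contributions into $\io|\nabla w|^2$ and produce
\begin{equation*}
 \tfrac{d}{dt}\io w^2 + \io|\nabla w|^2 \le C_1\io w^2 + C_2 \io |\nabla z|^2
\end{equation*}
for constants $C_1,C_2$ depending only on $M$, $L$, $\chi$.

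It remains to dominate $\io|\nabla z|^2$ by $w$ and $z$. If $\tau=0$, testing the elliptic equation $-\Delta z+z=w$ with $z$ directly gives $\io|\nabla z|^2+\io z^2\le \tfrac12\io w^2+\tfrac12\io z^2$, so the previous inequality closes and Gronwall applied to $\io w^2$ forces $w\equiv 0$, hence $z\equiv 0$. If $\tau=1$, testing $z_t=\Delta z-z+w$ with $z$ yields
\begin{equation*}
 \tfrac12\tfrac{d}{dt}\io z^2 + \io|\nabla z|^2 + \io z^2 \le \tfrac12\io w^2 + \tfrac12\io z^2,
\end{equation*}
and combining $y(t):=\io w^2+K\io z^2$ for $K\ge C_2$ gives $y'(t)\le C\,y(t)$ with $y(0)=0$. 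Gronwall again forces $y\equiv 0$, hence $w\equiv 0$ and $z\equiv 0$ on $[0,T]$.

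The argument is largely routine; the only mild subtlety is the cross-coupling through $u_2\,\nabla z$ in the chemotactic term, which forces one to carry the $z$-estimate (or, when $\tau=0$, its elliptic analogue) alongside the $w$-estimate rather than treating the $u$-equation in isolation. Since $u_2$ is only assumed continuous (not bounded a priori with a quantitative constant), I would emphasize early that the compactness of $\Ombar\times[0,T]$ provides the needed uniform bound $M$, which is what turns the otherwise delicate chemotaxis term into a harmless linear contribution.
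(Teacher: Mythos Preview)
Your argument is correct and is precisely the approach the paper sketches: a Gr\"onwall inequality for $\io (u_1-u_2)^2 + \tau\io (v_1-v_2)^2$, with the elliptic estimate on $z$ substituting for the $z$-energy when $\tau=0$. One small expository point: since the hypotheses provide only $C^1$ spatial regularity, the subtracted equations should be read in the weak sense rather than ``pointwise'' (the paper explicitly notes that prior to Lemma~\ref{lem:ex-part2}, ``solution'' means weak solution)---but this is exactly what your testing procedure uses, so the argument is unaffected.
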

\begin{proof}
 By means of a Grönwall-type argument this is obtained from a study of \[\f{d}{dt}\kl{ \io(u_1-u_2)^2+τ\io(v_1-v_2)^2}\quad \textrm{for all } t \in [0,T].\qedhere\]
\end{proof}

\begin{lemma}\label{lem:ex-part2}
Let $N\in ℕ$, $\Omega\subset ℝ^N$ be a bounded domain with smooth boundary, $δ>0$, $\chi \in \R$, $τ\in \{0,1\}$, and let $f$ and $g$ fulfill assumptions \eqref{fAssTheo} and \eqref{GlocallyLips}. For $u_0\in C^{1+δ}(\Ombar)$, $τv_0\in C^{2+δ}(\Ombar)$ satisfying 
\begin{equation}\label{cond:init-compatibility-nonnegativity}
  ∂_{ν}u_0\bdry=0, \tau∂_{ν}v_0\bdry=0\mbox{ and } u_0\ge 0, τv_0\ge 0
\end{equation}
there is $\TM\in(0,\infty]$ such that \eqref{system} has a unique solution
 \[
 (u,v) \in \kl{C^{1,\f{1}2}(\Ombar\times[0,\TM))\cap C^{2,1}(\Ombar\times(0,\TM))}\times C^{2,1}(\Ombar\times[0,\TM)),
 \]
 which is nonnegative, 
 and such that 
 \[
  \text{either } \TM=\infty \quad \text{or}\quad \lim_{t\upto \TM} \kl{\norm[C^{1+δ}(\Ombar)]{u(\cdot,t)}+\norm[C^{1+δ}(\Ombar)]{v(\cdot,t)}} = \infty. 
 \] 
\end{lemma}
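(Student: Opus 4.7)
The plan is to build $(u,v)$ by iterating Lemma~\ref{lem:ex-part1} and patching the resulting pieces. Let
\[
 S := \set{T>0 : \eqref{system} \text{ admits a solution on } [0,T] \text{ with the regularity of Lemma~\ref{lem:ex-part1}}},
\]
which is nonempty by Lemma~\ref{lem:ex-part1}. Set $\TM := \sup S \in (0,\infty]$. Uniqueness (Lemma~\ref{lem:unique}, whose hypotheses are met by the available $C^{1+δ,(1+δ)/2}\times C^{2+δ,1+δ/2}$-regularity) forces any two elements of $S$ to agree on their overlap, so that $(u,v)$ is consistently defined on $[0,\TM)$. Nonnegativity of $u$ follows from Lemma~\ref{LemmaSignUandV}, and that of $v$ from the maximum principle applied to the linear $v$-equation with nonnegative data $τv_0$ and source $u$. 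Since $C^{1+δ,(1+δ)/2}\subset C^{1,1/2}$ and $C^{2+δ,1+δ/2}\subset C^{2,1}$, this already yields the global-in-time regularity asserted in the lemma, with the sole exception of the $C^{2,1}$-part of the $u$-regularity.

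\textbf{Interior $C^{2,1}$-regularity of $u$.} To upgrade $u$ from $C^{1+δ,(1+δ)/2}$ to $C^{2,1}$ on $\Ombar\times(0,\TM)$, I fix $0<t_0<t_1<\TM$ and read the first equation of \eqref{system} as the linear parabolic problem
\[
 u_t - Δu + χ∇v\cdot∇u = -χuΔv + f(u) - g(∇u).
\]
With $v\in C^{2+δ,1+δ/2}$, $u\in C^{1+δ,(1+δ)/2}$ and $f,g$ locally Lipschitz, the drift $χ∇v$ lies in $C^{δ,δ/2}$ and the right-hand side is Hölder continuous of the same order. Parabolic Schauder theory (\cite[IV.5.3]{LSU}) yields $u\in C^{2+δ,1+δ/2}(\Ombar\times[t_0,t_1])$, whence $u\in C^{2,1}(\Ombar\times(0,\TM))$.

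\textbf{Extensibility criterion.} I argue by contradiction: assume $\TM<\infty$ and that the claimed limit is not $\infty$, i.e., there are $M>0$ and $t_n\upto\TM$ with $\norm[C^{1+δ}(\Ombar)]{u(\cdot,t_n)}+\norm[C^{1+δ}(\Ombar)]{v(\cdot,t_n)}\le M$. In order to restart Lemma~\ref{lem:ex-part1} at $t_n$, a uniform-in-$n$ bound on $\norm[C^{2+δ}(\Ombar)]{τv(\cdot,t_n)}$ is also needed. For $τ=0$ this is immediate from elliptic Schauder applied to $0=Δv-v+u$, which controls $\norm[C^{2+δ}(\Ombar)]{v(\cdot,t_n)}$ by $M$ alone; for $τ=1$ it calls for parabolic Schauder for the linear equation $v_t=Δv-v+u$ on a short backwards interval ending at $t_n$, preceded by a Hölder-in-time estimate on $u$ obtained from the $u$-equation (using $u,∇u,∇v\in L^\infty$ near $t_n$). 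With these bounds at hand, Lemma~\ref{lem:ex-part1} restarted from $(u(\cdot,t_n),v(\cdot,t_n))$ extends $(u,v)$ to $[t_n,t_n+T]$ with $T=T(M)>0$ \emph{independent of $n$}; choosing $n$ so large that $t_n+T>\TM$ and patching by uniqueness (Lemma~\ref{lem:unique}) contradicts the maximality of $\TM$.

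\textbf{Main obstacle.} The delicate step is the $τ=1$ bootstrap converting the sole $C^{1+δ}$-bound on $u(\cdot,t_n)$ into an $n$-uniform $C^{2+δ}$-bound on $v(\cdot,t_n)$: while Lemma~\ref{lem:ex-part1} already furnishes $v\in C^{2+δ,1+δ/2}$ on every compact $[0,T]\subset[0,\TM)$, the underlying norm may a priori blow up as $T\to\TM^-$, and extracting a uniform bound requires a careful interplay of the $u$- and $v$-equations through Schauder and/or semigroup estimates. This is the one point where the argument genuinely exploits information about $(u,v)$ at times other than the sample points $t_n$.
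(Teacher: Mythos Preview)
Your overall strategy --- iterate Lemma~\ref{lem:ex-part1}, patch by Lemma~\ref{lem:unique}, upgrade $u$ to $C^{2,1}$ via Schauder, obtain nonnegativity from Lemma~\ref{LemmaSignUandV} and the maximum principle, and argue the extensibility criterion by contradiction --- is exactly what the paper does.

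The one substantive difference is your ``Main obstacle'' paragraph, and here you are making the proof harder than it needs to be. Re-read the hypothesis \eqref{ex-cond:u0v0} of Lemma~\ref{lem:ex-part1}: the existence time $T=T(M_1,M_2)$ depends only on the $C^{1+δ}$ bounds $M_1$ on $u_0$ and $M_2$ on $τv_0$, \emph{not} on the $C^{2+δ}$ norm of $v_0$. (Membership of $v_0$ in $C^{2+δ}$ is required for the restart, but this is automatic since $v\in C^{2+δ,1+δ/2}(\Ombar\times[0,t_n])$ is already established; no uniform-in-$n$ bound on that norm is needed.) Consequently the paper simply takes $T(M):=T(M,M)$ from Lemma~\ref{lem:ex-part1} and restarts at $t_n$ with no further bootstrap; your entire discussion of recovering a $C^{2+δ}$ bound on $v(\cdot,t_n)$ for $τ=1$ --- which, as sketched, also has a gap, since the only information available is at the sample points $t_n$, not on a backwards interval --- can be deleted.
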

\begin{proof}
Repeated application of the local existence result of Lemma~\ref{lem:ex-part1}, aided by the uniqueness statement of Lemma~\ref{lem:unique}, ensures the existence of a solution  $(u,v)\in C^{1+δ,\f{1+δ}2}(\Ombar\times[0,\TM))\times C^{2+δ,1+\f{δ}2}(\Ombar\times[0,\TM))$ with some maximally chosen $\TM>0$. If $\TM$ were finite and we did not have $\lim_{t\upto \TM} \kl{\norm[C^{1+δ}(\Ombar)]{u(\cdot,t)}+\norm[C^{1+δ}(\Ombar)]{v(\cdot,t)}} = \infty $, we could find $M>0$ and a sequence $t_k\upto \TM$ such that 
 $\kl{\norm[C^{1+δ}(\Ombar)]{u(\cdot,t_k)}+\norm[C^{1+δ}(\Ombar)]{v(\cdot,t_k)}}\le M$ for all $k\in ℕ$, which would enable us to extend the solution on $(t_k,t_k+T(M))$ with $T(M)\coloneq T(M,M)$ taken from Lemma~\ref{lem:ex-part1}. As $t_k+T(M)>\TM$ for large $k$, this would contradict maximality of $\TM$. 
 
Further application of classical regularity theory (cf. the Schauder estimates contained in \cite[IV.5.3]{LSU}) ensures that also $u\in C^{2,1}(\Ombar\times(0,\TM))$. 

Finally, we observe that from Lemma \ref{LemmaSignUandV} we have that $u \geq 0$ and, in turn,  $v\geq 0$ is consequence of the elliptic or parabolic maximum principle applied to the second equation of \eqref{system}.  
\end{proof}

The presence of additional gradient terms becomes noticable in the next step. In order to rely on classical regularity theory, we require some restriction of their growth, so as to preclude gradient blow-up.
\begin{lemma}\label{lem:ex:continuation_in_conedelta}
 Let $\Om$, $\chi$, $τ$, $u_0, v_0$, $f$ and $g$ be as in Lemma~\ref{lem:ex-part2}, and let $g$ satisfy assumption \eqref{ConstantForg}. Then there is $\TM\in(0,\infty]$ such that \eqref{system} has a solution 
 \[
  (u,v) \in \kl{C^{1,\f{1}2}(\Ombar\times[0,\TM))\cap C^{2,1}(\Ombar\times(0,\TM))}\times C^{2,1}(\Ombar\times[0,\TM))
 \]
 and such that 
 \[
  \text{either } \TM=\infty \quad \text{or}\quad \limsup_{t\upto \TM} \norm[\Lom\infty]{u(\cdot,t)} = \infty 
 \]
\end{lemma}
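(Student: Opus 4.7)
The plan is to take the solution $(u,v)$ provided by Lemma~\ref{lem:ex-part2}, together with its maximal existence time $\TM$ and the blow-up criterion involving $\norm[C^{1+\delta}(\Ombar)]{u(\cdot,t)}+\norm[C^{1+\delta}(\Ombar)]{v(\cdot,t)}$, and to argue by contradiction. Namely, I would suppose $\TM<\infty$ and $\limsup_{t\upto\TM}\norm[\Lom\infty]{u(\cdot,t)}<\infty$, set $M:=\sup_{t\in[0,\TM)}\norm[\Lom\infty]{u(\cdot,t)}$, and then show that the combination of the $L^\infty$-bound on $u$ and assumption \eqref{ConstantForg} already forces uniform $C^{1+\delta}$-bounds on both $u$ and $v$ up to time $\TM$, contradicting the dichotomy in Lemma~\ref{lem:ex-part2}.

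The easy half is controlling $v$. In the case $\tau=0$ the identity $-\Delta v+v=u$ together with $u\in L^\infty(\Om\times(0,\TM))$ and elliptic $W^{2,p}$-theory yield $v(\cdot,t)\in W^{2,p}(\Om)$ uniformly in $t$ for every $p<\infty$, from which Sobolev embedding gives a uniform $C^{1+\delta'}$-bound. For $\tau=1$, standard parabolic semigroup or Schauder theory applied to $v_t=\Delta v-v+u$ with $v_0\in C^{2+\delta}(\Ombar)$, $\partial_\nu v_0\bdry=0$ and $u\in L^\infty$ produces a uniform bound for $v$ in $L^\infty((0,\TM);C^{1+\delta'}(\Ombar))$; in particular $\nabla v$ is bounded on $\Om\times(0,\TM)$.

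The central step is to deduce a uniform $C^{1+\delta}$-bound for $u$ itself. Writing the first equation in divergence form,
\[
 u_t=\nabla\cdot(\nabla u-\chi u\nabla v)+f(u)-g(\nabla u) \qquad \text{in } \Omega\times(0,\TM),
\]
the flux $\nabla u-\chi u\nabla v$ is uniformly elliptic with bounded coefficient (via the bound on $\nabla v$), while the right-hand side satisfies, thanks to \eqref{ConstantForfThe} and \eqref{ConstantForg},
\[
 |f(u)-g(\nabla u)|\le C_f+C_g(1+|\nabla u|^2) \le c(1+|\nabla u|^2).
\]
This is exactly the natural (quadratic) growth condition covered by Lieberman's Hölder gradient estimate for quasilinear parabolic equations under Neumann boundary data (the same result invoked in the proof of Lemma~\ref{lem:ex-part1}, \cite[Thm.~1.2]{lieberman_paper}). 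Applying it with the known $L^\infty$-bound on $u$, the Neumann compatibility of $u_0$ and the regularity of $u_0\in C^{1+\delta}(\Ombar)$, one obtains a uniform estimate $\norm[C^{1+\delta,(1+\delta)/2}(\Ombar\times[0,\TM))]{u}\le c$. Together with the bound on $v$, this contradicts the blow-up alternative of Lemma~\ref{lem:ex-part2}.

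The main obstacle is the quadratic growth in $g$: the exponent $2$ in \eqref{ConstantForg} is the critical borderline for parabolic gradient regularity, so a softer argument (e.g.\ iteration from $L^\infty$ to $W^{1,p}$ via Duhamel) will not suffice, and one really needs the sharp Hölder gradient theorem for equations with natural growth. Once the citation of Lieberman's result is in place, the remainder of the proof is the bootstrap outlined above and the standard identification of the resulting existence interval with a new maximal time satisfying the improved criterion $\limsup_{t\upto\TM}\norm[\Lom\infty]{u(\cdot,t)}=\infty$.
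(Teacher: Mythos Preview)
Your proposal is correct and follows essentially the same route as the paper: assume $\TM<\infty$ and $u$ bounded in $L^\infty$, upgrade this to a $C^{1+\delta}$-bound on $v$ via elliptic/parabolic regularity, then write the $u$-equation as $u_t=\nabla\cdot A(x,t,u,\nabla u)+B(x,t,u,\nabla u)$ with $B$ of quadratic growth in the gradient and invoke Lieberman's H\"older gradient estimate to contradict the blow-up alternative of Lemma~\ref{lem:ex-part2}. One small slip: you cite \eqref{ConstantForfThe} to bound $f(u)$, but that condition is \emph{not} among the hypotheses of this lemma (only \eqref{fAssTheo}, \eqref{GlocallyLips} and \eqref{ConstantForg} are inherited); the bound you need follows instead from local Lipschitz continuity of $f$ together with the assumed $L^\infty$-bound on $u$.
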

\begin{proof}
 Assuming that the maximal existence time $\TM$ (cf. Lemma~\ref{lem:ex-part2}) were finite, by the extensibility criterion in Lemma~\ref{lem:ex-part2} it is sufficient to show that boundedness of $\norm[\Lom\infty]{u(\cdot,t)}$ for $t\in[0,\TM]$ entails boundedness of $\norm[C^{1+δ}(\Ombar)]{u(\cdot,t)}+\norm[C^{1+δ}(\Ombar)]{v(\cdot,t)}$ for $t\in(\f{\TM}2,\TM)$. Boundedness of $\norm[C^{1+δ}(\Ombar)]{v(\cdot,t)}$ results from that of $\norm[\Lom\infty]{u(\cdot,t)}$ by means of \cite[Thm.~1.2]{lieberman_paper} (if $τ=1$) or, e.g. from \cite[Thm.~I.19.1]{friedman_pde} (if $\tau=0$). According to the nomenclature in  \cite{lieberman_paper} letting $A(x,t,z,p)=p-z\nabla v(x,t)$ and $B(x,t,z,p)=f(z)-g(p)$, we conclude from the fact that $u$ is a bounded solution to $u_t=\nabla\cdot(A(x,t,u,∇u)) + B(x,t,u,∇u)$ and \cite[Thm.~1.2]{lieberman_paper} that $\sup_{t\in(\f{3\TM}4,\TM)}\norm[C^{1+δ}(\Ombar)]{u(\cdot,t)}$ is finite.
\end{proof}

\begin{lemma}\label{lem:extcrit}
Let $N\in ℕ$, $\Omega\subset ℝ^N$ be a bounded domain with smooth boundary, $δ>0$, $\chi \in \R$, $τ\in \{0,1\}$, let $f$ and $g$ comply with \eqref{fAssTheo}, \eqref{ConstantForfThe}  and \eqref{GlocallyLips}.
  For every $p>\f N2$ and $K>0$, there are $q>N$, $L>0$ and $M>0$ such that if 
  $(u,v)$ is a solution to \eqref{system} for any $u_0\in C^{1+δ}(\Ombar)$, $τv_0\in C^{2+δ}(\Ombar)$ satisfying \eqref{cond:init-compatibility-nonnegativity} and
 \[
  \norm[\Lom p]{u(\cdot,t)} \le K \quad \text{for all } t\in [0,\TM),\quad \norm[\Lom \infty]{u_0}\le K \text{ and } \norm[\Lom q]{∇v_0}\le K, 
 \]
 then 
 \begin{equation}\label{eq:est_na_v}
  \norm[\Lom q]{∇v(\cdot,t)} \le L \text{ for all } t\in(0,\TM)
 \end{equation}
 and 
 \[
  \norm[\Lom \infty]{u(\cdot,t)}\le M \text{ for all } t\in(0,\TM).
 \]
\end{lemma}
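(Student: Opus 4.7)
The proof is a two-stage bootstrap of the kind standard in the Keller--Segel literature. Starting from the hypothesis $\|u(\cdot,t)\|_{L^p(\Om)}\le K$ with $p>\f N2$, the first stage establishes \eqref{eq:est_na_v}, namely a time-uniform bound on $\|\nabla v(\cdot,t)\|_{L^q(\Om)}$ for a suitably chosen $q>N$. The second stage then combines this bound with the one-sided pointwise estimates $f(u)\le C_f$ and $g(\nabla u)\ge 0$ provided by \eqref{ConstantForfThe} and \eqref{GlocallyLips} to produce the $L^\infty$-estimate on $u$.

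\textbf{First stage.} Since $p>\f N2$, one can fix $q>N$ with $\f1p-\f1N<\f1q$, so that $\vartheta:=\f12+\f N2\kl{\f1p-\f1q}<1$. For $\tau=1$, the Duhamel representation $v(\cdot,t)=e^{t(\Delta-1)}v_0+\int_0^t e^{(t-s)(\Delta-1)}u(\cdot,s)\,ds$ combined with the smoothing estimate $\|\nabla e^{\sigma(\Delta-1)}w\|_{L^q(\Om)}\le c(1+\sigma^{-\vartheta})e^{-\sigma}\|w\|_{L^p(\Om)}$ (cf.\ \cite[Lemma~1.3]{win_aggregation_vs}), with $\|\nabla e^{t(\Delta-1)}v_0\|_{L^q(\Om)}\le c\|\nabla v_0\|_{L^q(\Om)}\le cK$, and with the integrability of $(1+\sigma^{-\vartheta})e^{-\sigma}$ on $(0,\infty)$, yields \eqref{eq:est_na_v}. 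For $\tau=0$ the same conclusion follows from elliptic $W^{2,p}$-regularity applied to $(I-\Delta)v=u$ with homogeneous Neumann conditions and Sobolev embedding.

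\textbf{Second stage.} Testing the first equation of \eqref{system} against $u^{r-1}$ for $r\ge 2$ and integrating by parts, while using $f(u)\le C_f$ and $g\ge 0$, produces
\begin{equation*}
 \f1r\f{d}{dt}\io u^r+\f{4(r-1)}{r^2}\io|\nabla u^{r/2}|^2\le \chi(r-1)\io u^{r-1}|\nabla u||\nabla v|+C_f\io u^{r-1}.
\end{equation*}
The chemotactic cross-term is dealt with by Young's inequality (partially absorbing $|\nabla u|^2$ into the dissipation), then by H\"older's inequality exploiting $\|\nabla v\|_{L^q(\Om)}\le L$, and finally by a Gagliardo--Nirenberg interpolation that uses the hypothesis $\|u(\cdot,t)\|_{L^p(\Om)}\le K$ at the low-regularity endpoint. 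This yields, for every finite $r\ge p$, a time-uniform bound on $\io u^r$, and a classical Moser-type iteration on $r$ (with careful tracking of the $r$-dependence of the constants) then gives $\|u(\cdot,t)\|_{L^\infty(\Om)}\le M$.

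\textbf{Main obstacle.} The crucial quantitative point is the choice of $q>N$, enabled by $p>\f N2$: this makes the time-singular kernel in the first stage integrable and provides enough room in the second stage for the Gagliardo--Nirenberg interpolation to beat the chemotactic cross-term via the dissipation. The gradient damping $-g(\nabla u)$ creates, perhaps surprisingly, no additional difficulty here, since only its nonnegativity is used; the quantitative growth condition \eqref{ConstantForg} has already been invoked in Lemma~\ref{lem:ex:continuation_in_conedelta} to pass from $L^\infty$- to $C^{1+\delta}$-bounds on $u$ and plays no role at this stage.
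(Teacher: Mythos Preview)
Your first stage matches the paper's argument essentially verbatim: choose $q>N$ with $\f1p-\f1N<\f1q$, then use semigroup smoothing (if $\tau=1$) or elliptic regularity (if $\tau=0$) to get \eqref{eq:est_na_v}.

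In the second stage, however, you take a genuinely different route. The paper does \emph{not} test with $u^{r-1}$ or run a Moser iteration. Instead it works directly on the Duhamel formula for $u$, using $g\ge 0$ and $f\le C_f$ to write $0\le u(t)\le e^{t\Delta}u_0-\chi\int_0^t e^{(t-s)\Delta}\nabla\cdot(u\nabla v)\,ds+C_f$ and then applying the $L^r\!\to\!L^\infty$ divergence semigroup estimate \cite[Lemma~1.3(iv)]{win_aggregation_vs} with an auxiliary $\tilde p>p$ chosen so that $(\f1q+\f1{\tilde p})^{-1}>N$. After interpolating $\norm[\Lom{\tilde p}]{u}\le \norm[\Lom p]{u}^{p/\tilde p}\norm[\Lom\infty]{u}^{1-p/\tilde p}$, this yields the self-referential inequality $M(t)\le c_3+c_4 M(t)^{1-p/\tilde p}$ for $M(t)=\sup_{s\le t}\norm[\Lom\infty]{u(\cdot,s)}$, from which the $L^\infty$-bound follows in one line.

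Both approaches are correct and standard. Yours is arguably more elementary in that it avoids the sharp semigroup estimate for the divergence term and instead relies only on Gagliardo--Nirenberg and Young; the price is the bookkeeping of a full Moser iteration, which you have not written out. The paper's approach is considerably shorter and produces the $L^\infty$-bound in a single closed inequality rather than as a limit over an increasing sequence of exponents. If you keep your version, you should at least indicate precisely which interpolation inequality (with which exponents) closes the estimate at each level of the iteration, since the condition $q>N$ is exactly what makes the Gagliardo--Nirenberg exponent subcritical.
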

\begin{proof}
 From $\norm[\Lom p]{u(\cdot,t)} \le K$ and $p>\f N2$, which implies that $\f{Np}{(N-p)_{+}}>N$, by elliptic regularity theory \cite[I.19.1]{friedman_pde} (if $τ=0$) or (if $τ=1$) semigroup estimates (\cite[Lemma~1.3(ii)]{win_aggregation_vs} applied in $∇v(t)=∇e^{t(Δ-1)}v_0 +\int_0^t ∇e^{(t-s)(Δ-1)}u(s)ds$) we find $L>0$ and $q>N$ such that \eqref{eq:est_na_v} holds. 
 We let $c_1>0$ and $c_2>0$ be the constants from the semigroup estimates in \cite[Lemma~1.3(i)]{win_aggregation_vs} and \cite[Lemma~1.3(iv)]{win_aggregation_vs}, 
  and use that $f(s)\le C_f$ for all $s\in ℝ$. Choosing $\tilde p>p$ so large that $(\f1q+\f1{\tilde p})^{-1}>N$, we find that 
 \begin{align*}
  0&\le u(t)\le  e^{tΔ} u_0 -\chi \int_0^t e^{(t-s)Δ}∇\cdot(u∇v) ds + \int_0^t e^{(t-s)Δ}f(u(s))ds\\
  &\le \norm[\Lom\infty]{u_0} + c_2 |χ|\int_0^t (1+(t-s)^{-α}) \norm[\Lom p]{u(\cdot,s)}^{\f{p}{\tilde p}}\norm[\Lom\infty]{u(\cdot,s)}^{1-\f{p}{\tilde p}}\norm[\Lom q]{∇v(\cdot,s)} ds + C_f
 \end{align*}
 for $t\le \min\set{1,\TM}$ and $α=\f12+\f N2(\f1q+\f1{\tilde p})<1$ and that with $τ=t-1+s$
 \begin{align*}
  0\le &u(\cdot,t) \le e^{1Δ}u(\cdot,t-1)-\chi \int_0^1 e^{(1-s)Δ}∇\cdot (u(\cdot,τ)∇v(\cdot,τ)) ds +\int_0^1e^{(1-s)Δ}f(u(\cdot,τ)) ds\\
  \le &2c_1\norm[\Lom p]{u(\cdot,t-1)} \\
  &+ c_2 |\chi|\int_0^1 (1+(1-s)^{-α}) \norm[\Lom p]{u(\cdot,τ)}^{\f{p}{\tilde p}}\norm[\Lom\infty]{u(\cdot,τ)}^{1-\f{p}{\tilde p}}\norm[\Lom q]{∇v(\cdot,τ)} ds + C_f
 \end{align*}
 for $t\in(1,\TM)$, so that upon defining $M(t)=\sup_{τ\in(0,t)} \norm[\Lom\infty]{u(\cdot,τ)}$, we obtain 
 \[
  M(t) \le \underbrace{C_f+\max\set{\norm[\Lom\infty]{u_0},2c_1 K}}_{=:c_3} 
  + \underbrace{c_2|χ|\int_0^1(1+τ^{-\f12-\f N2(\f1q+\f1{\tilde p})}) dτ K^{\f p{\tilde p}} L}_{=:c_4}(M(t))^{1-\f p{\tilde p}},
 \]
 for any $t\in(0,\TM)$, which shows that 
 \[
  M(t)\le \sup\set{x\mid x\le c_3+c_4 x^{1-\f{p}{\tilde p}}} =: M <\infty.\qedhere
 \]
\end{proof}

\begin{proof}[Proof of Theorem \ref{localSol}]
 The existence and the nonnegativity follow from Lemma~\ref{lem:ex-part2}; this form of the extensibility criterion is obtained from Lemma~\ref{lem:extcrit} in combination with Lemma \ref{lem:ex:continuation_in_conedelta}.
\end{proof}

\section{Some a priori estimates. Proofs of Theorem \ref{MainTheorem} and Corollary \ref{Corollary}}
The proof of Theorem \ref{MainTheorem} goes through some steps, which altogether ensure uniform boundedness of $u$ in some $L^p$-norm.  
\subsection{\textit{A priori} estimates}
From now to $\S$\ref{SusectionProofTheo}, we assume that $u_0,τv_0$ are given as required by Theorem~\ref{MainTheorem} and, for $\chi>0$, by $(u,v)$ we will indicate the local solution to model \eqref{system} provided by Theorem \ref{localSol}; moreover, with the aim of exploiting the related extensibility criterion \eqref{extcrit}, let us focus on the derivation of some \textit{a priori} $L^p$-estimates for the $u$-component, by specifying how each of the constraints in Theorem \ref{MainTheorem} takes part in our computations. 
In a first step, let us derive a control on the total mass of $u$ from nonnegativity of $g$ and the behaviour of $f$. 
\begin{lemma}\label{LemmaMassAndNablavRegularity}
Let $f$ and $g$ satisfy \eqref{assumptionsCorollary}. Then, for some $m_0>0$ we have that
$u$ is such that 
\begin{equation}\label{massConservation}
\int_\Omega u(x, t)dx \leq m_0\quad \textrm{for all }\, t \in \left(0,T_{max}\right).
\end{equation}
\end{lemma}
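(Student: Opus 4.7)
The plan is to integrate the first equation of \eqref{system} over $\Omega$ and convert the resulting estimate into an ODI for $y(t)\coloneq \io u(\cdot,t)$.

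First, testing the equation for $u$ against the constant function $1$ and using the homogeneous Neumann boundary conditions on both $u$ and $v$, the diffusive and chemotactic contributions vanish: $\io \Delta u = 0$ and $\io \nabla\cdot(u\nabla v) = 0$. This yields, for $t\in(0,\TM)$,
\[
 \f{d}{dt} \io u(\cdot,t) = \io f(u(\cdot,t)) - \io g(\nabla u(\cdot,t)).
\]
The nonnegativity of $g$ (guaranteed by \eqref{ConstantForg} or more directly by the lower bound in \eqref{assumptionsCorollary}, which forces $g\ge 0$) allows to drop the second term on the right-hand side, while the linear pointwise bound $f(s)\le C_1-C_2 s$ from \eqref{assumptionsCorollary}, applied using $u\ge 0$ (from Lemma~\ref{LemmaSignUandV}), gives
\[
 \f{d}{dt} y(t) \le C_1|\Omega| - C_2 y(t) \qquad \text{for all } t\in(0,\TM).
\]

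Second, a standard ODE comparison argument (or an integrating-factor computation with $e^{C_2 t}$) then yields
\[
 y(t) \le \max\set{y(0),\f{C_1|\Omega|}{C_2}} \qquad \text{for all } t\in(0,\TM),
\]
so that the claim follows with $m_0\coloneq \max\set{\norm[\Lom1]{u_0},\f{C_1|\Omega|}{C_2}}$.

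The argument is essentially routine; the only point that requires a moment's care is the vanishing of the cross-diffusive boundary term, which relies on the classical regularity asserted in Theorem~\ref{localSol} together with the Neumann conditions, and the use of the lower bound on $g$ (rather than only $g(0)=0$) to discard its integral without sign issues. No real obstacle arises here.
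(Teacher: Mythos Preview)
Your proof is correct and follows essentially the same route as the paper: integrate the first equation over $\Omega$, use the Neumann conditions to drop the divergence terms, apply the bounds from \eqref{assumptionsCorollary} to obtain the linear ODI $y'\le C_1|\Omega|-C_2 y$, and conclude by ODE comparison with the same $m_0=\max\{\norm[\Lom1]{u_0},C_1|\Omega|/C_2\}$. One tiny slip: \eqref{ConstantForg} is an \emph{upper} bound on $g$ and does not give nonnegativity; the nonnegativity of $g$ comes from its codomain in \eqref{GlocallyLips} (or, as you also note, from the lower bound in \eqref{assumptionsCorollary}).
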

\begin{proof}
An integration over $\Omega$ of the first equation in \eqref{system} due to the estimates in \eqref{assumptionsCorollary} shows that 
\[
\frac{d}{dt} \int_\Omega u =\int_\Omega (f(u)-g(\nabla u))\leq  C_1|\Omega|-C_2\int_\Omega u \quad \textrm{in }\, \left(0,T_{max}\right),
\]
so that we arrive at $\int_\Omega u(\cdot,t)\leq m_0=\max\left\{\int_\Omega u_0,\frac{C_1|\Omega|}{C_2}\right\}$ for all $t\in (0,\TM)$.  
\end{proof}

The term $\int_\Omega |\nabla u^\frac{p}{2}|^2$ in any dimension controls $\int_\Omega u^p$:
\begin{lemma}\label{lem:gni}
Let $p>1$. Then there is $k>0$ such that 
\begin{equation*}\label{utopControllGrad}
\int_\Omega u^p \leq 4\frac{p-1}{p} \int_\Omega |\nabla u^\frac{p}{2}|^2 +k \quad \textrm{on }\; (0,T_{max}).
\end{equation*}
\end{lemma}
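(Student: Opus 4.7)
The plan is to estimate $\int_\Omega u^p$ by rewriting it as $\|u^{p/2}\|_{L^2(\Omega)}^2$ and invoking a Gagliardo--Nirenberg interpolation between this norm, its gradient and a norm that is controlled by the mass bound of Lemma \ref{LemmaMassAndNablavRegularity}. Concretely, set $w:=u^{p/2}$. Then
\[
 \int_\Omega u^p = \|w\|_{L^2(\Omega)}^2 \quad\text{and}\quad \|w\|_{L^{2/p}(\Omega)}^{2/p} = \int_\Omega w^{2/p} = \int_\Omega u \leq m_0,
\]
the last inequality following from \eqref{massConservation}. Hence $\|w\|_{L^{2/p}(\Omega)}$ is a priori bounded on $(0,\TM)$ by the constant $m_0^{p/2}$.

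Next I would invoke the Gagliardo--Nirenberg inequality in $W^{1,2}(\Omega)$ in the form
\[
 \|w\|_{L^2(\Omega)}^2 \le \Ctil\kl{\|\nabla w\|_{L^2(\Omega)}^{2\theta}\|w\|_{L^{2/p}(\Omega)}^{2(1-\theta)} + \|w\|_{L^{2/p}(\Omega)}^2},
\]
with interpolation parameter $\theta$ determined by the scaling identity $\f12 = \theta\kl{\f12-\f1N}+(1-\theta)\f p2$, which yields $\theta = \f{N(p-1)}{N(p-1)+2}\in(0,1)$ for every $p>1$. On the cross term I would then apply Young's inequality with exponents $\f1\theta$ and $\f1{1-\theta}$, choosing the small parameter so that the prefactor in front of $\|\nabla w\|_{L^2(\Omega)}^2=\int_\Omega|\nabla u^{p/2}|^2$ becomes exactly $\f{4(p-1)}p$; explicitly, with $\eta:=\f{4(p-1)}{p\,\Ctil}$,
\[
 \Ctil\|\nabla w\|_{L^2(\Omega)}^{2\theta}\|w\|_{L^{2/p}(\Omega)}^{2(1-\theta)} \le \f{4(p-1)}p \int_\Omega|\nabla u^{p/2}|^2 + c_\eta \|w\|_{L^{2/p}(\Omega)}^2.
\]
Combining this with the lower-order GN remainder $\Ctil\|w\|_{L^{2/p}(\Omega)}^2$ and using the just-mentioned bound $\|w\|_{L^{2/p}(\Omega)}^2\le m_0^p$, one obtains the desired inequality with $k:=(c_\eta+\Ctil)m_0^p$.

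The main subtlety, and thus where I would need to be careful, is the application of Gagliardo--Nirenberg when $p>2$, since then $\f2p<1$ and the inequality is being invoked with an $L^s$-quasi-norm on the right-hand side; this is still valid in its standard bounded-domain formulation, but if one prefers to stay with the classical statement, a cleaner route is to apply it to $w_\varepsilon:=(u+\varepsilon)^{p/2}$ first and let $\varepsilon\downarrow 0$ by monotone convergence, using that $u\in C^1(\Ombar)$ renders all quantities continuous in $\varepsilon$. Apart from this, the only thing to check is that the scaling identity really forces the above $\theta\in(0,1)$ and that $1-\theta>0$ so that Young's inequality is legitimate; both hold for arbitrary $p>1$ and $N\in\N$, so the argument goes through uniformly in $t\in(0,\TM)$ with $k$ independent of $t$.
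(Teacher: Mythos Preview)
Your proof is correct and essentially identical to the paper's: both write $\int_\Omega u^p=\|u^{p/2}\|_{L^2(\Omega)}^2$, apply the Gagliardo--Nirenberg inequality interpolating between $\|\nabla u^{p/2}\|_{L^2(\Omega)}$ and $\|u^{p/2}\|_{L^{2/p}(\Omega)}$ with the same exponent $\theta=\frac{N(p-1)}{N(p-1)+2}$, invoke the mass bound \eqref{massConservation}, and close via Young's inequality. For the sub-unit exponent $2/p<1$ when $p>2$, the paper simply appeals to an extended Gagliardo--Nirenberg inequality (citing \cite[Lemma 2.3]{LiLankeitNonlinearity2016Hapto}); note that your proposed $\varepsilon$-shift $w_\varepsilon=(u+\varepsilon)^{p/2}$ does not actually sidestep the need for an $L^{2/p}$ quasi-norm, so the extended inequality you already acknowledge as valid is the correct justification.
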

\begin{proof}
We let 
$$
\check{\theta}=\frac{\frac{p}{2}-\frac{1}{2}}{\frac{p}{2}-\frac{1}{2}+\frac{1}{N}}\in \left(0,1\right),
$$ 
and apply the Gagliardo-Nirenberg inequality (possibly used in the less common version with exponents below one, given in, e.g. \cite[Lemma 2.3]{LiLankeitNonlinearity2016Hapto}) and 
\begin{equation}\label{InequalitiAlgebraic2ToThePower}
(A+B)^l\le \max\{1,2^{l-1}\}(A^l+B^l),\quad l>0, A,B\geq 0,
\end{equation}
which together with \eqref{massConservation} yields $\check{C}_{GN}>0$ and $k>0$ such that 
\begin{equation*}
\int_\Omega u^p=\lVert u^\frac{p}{2}\rVert_{L^2(\Omega)}^2 \leq \check{C}_{GN}\left(\lVert \nabla u^\frac{p}{2}\rVert_{L^2(\Omega)}^{2\check{\theta}} \lVert u^\frac{p}{2}\rVert_{L^\frac{2}{p}(\Omega)}^{2(1-\check{\theta})}+\lVert u^\frac{p}{2}\rVert_{L^\frac{2}{p}(\Omega)}^2\right)\leq 4\frac{p-1}{p} \int_\Omega |\nabla u^\frac{p}{2}|^2 +k
\end{equation*}
on $(0,\Tmax)$.
\end{proof}

\begin{lemma}\label{lem:ddtioup}
Let $f$ and $g$ satisfy \eqref{assumptionsCorollary} and let $p> 1$. There is $K>0$ such that 
 \begin{align*}
  \frac{d}{dt}&\int_\Omega u^p + \int_\Omega u^p\\
  &\le  - \chi (p-1)\int_\Omega u^p\Delta v 
  + \chi (p-1)\io u^{p+1} 
  + K -C_3 p \left(\frac{p-1+\gamma}{\gamma}\right)^{-\gamma}\int_\Omega |\nabla u^\frac{p-1+\gamma}{\gamma}|^\gamma
 \end{align*}
 on $(0,\Tmax)$.
\end{lemma}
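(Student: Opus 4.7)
The plan is to test the first equation of \eqref{system} against $pu^{p-1}$ and integrate over $\Omega$, in the usual fashion for $L^p$ estimates in parabolic problems. Writing $\frac{d}{dt}\io u^p = p\io u^{p-1}u_t$ and expanding, I would treat the four resulting terms separately. The diffusion term produces $-p(p-1)\io u^{p-2}|\nabla u|^2 = -\frac{4(p-1)}{p}\io|\nabla u^{p/2}|^2$, which is nonpositive and may simply be dropped from any upper bound. For the taxis term, two integrations by parts under the Neumann condition $\partial_\nu u=\partial_\nu v=0$ on $\partial\Omega$ rewrite
\[
-p\chi\io u^{p-1}\nabla\cdot(u\nabla v) = -(p-1)\chi\io \nabla u^p \cdot \tfrac{\nabla v}{\,}\cdot\tfrac{1}{\,}\;  \text{(cancellation step)}\; = -(p-1)\chi\io u^p\Delta v,
\]
which yields the $-\chi(p-1)\io u^p\Delta v$ contribution on the right-hand side verbatim. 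This term will be treated only later (when the second equation of \eqref{system} is invoked), so it is kept in its present form here.

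For the reaction term, the assumption $f(s)\le C_1-C_2 s$ in \eqref{assumptionsCorollary} combined with $u\ge 0$ from Lemma~\ref{LemmaSignUandV} gives $p\io u^{p-1}f(u)\le pC_1\io u^{p-1} -pC_2\io u^p$. For the damping term, I would use $g(z)\ge C_3|z|^\gamma$ together with the elementary chain-rule identity
\[
|\nabla u^{\frac{p-1+\gamma}{\gamma}}|^{\gamma} = \left(\tfrac{p-1+\gamma}{\gamma}\right)^{\gamma} u^{p-1}|\nabla u|^{\gamma},
\]
so that $-p\io u^{p-1}g(\nabla u)\le -C_3 p\bigl(\tfrac{p-1+\gamma}{\gamma}\bigr)^{-\gamma}\io|\nabla u^{(p-1+\gamma)/\gamma}|^{\gamma}$, delivering the $g$-contribution precisely as in the statement.

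To complete the estimate I would add $\io u^p$ to both sides and then invoke Young's inequality, applied to the stray low-order powers of $u$: both $pC_1\io u^{p-1}$ and the residual multiple of $\io u^p$ (coming from the $-pC_2\io u^p$ piece and the added $\io u^p$) can, via Young with exponents $\tfrac{p+1}{p}$ and $p+1$, be absorbed into $\chi(p-1)\io u^{p+1}$ plus a $p$-dependent constant $K$. This produces the coefficient $\chi(p-1)$ exactly as written in the lemma and gathers all lower-order contributions into $K$.

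There is no substantial obstacle here; the calculation is purely routine, and the only point requiring care is the algebraic identity for $|\nabla u^{(p-1+\gamma)/\gamma}|^\gamma$ and the choice of coefficients in Young's inequality so that the remaining error is collected into a single constant $K$ while the $\chi(p-1)\io u^{p+1}$ term is left explicit (since it will couple, in a later lemma, with the $-\chi(p-1)\io u^p\Delta v$ contribution after the second equation of \eqref{system} is brought in).
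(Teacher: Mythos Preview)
Your argument is correct and follows essentially the same testing procedure as the paper, but you handle the added $\int_\Omega u^p$ differently. The paper does \emph{not} drop the diffusion term $-\tfrac{4(p-1)}{p}\int_\Omega|\nabla u^{p/2}|^2$; instead it keeps it and invokes Lemma~\ref{lem:gni} (a Gagliardo--Nirenberg estimate together with the mass bound) to write $\int_\Omega u^p \le \tfrac{4(p-1)}{p}\int_\Omega|\nabla u^{p/2}|^2 + k$, so that the added $\int_\Omega u^p$ is absorbed by diffusion, while only $C_1p\int_\Omega u^{p-1}$ is handled via Young against $\chi(p-1)\int_\Omega u^{p+1}$. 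You instead discard the diffusion contribution and absorb \emph{both} $pC_1\int_\Omega u^{p-1}$ and the residual multiple of $\int_\Omega u^p$ into $\chi(p-1)\int_\Omega u^{p+1}$ by Young's inequality. Your route is slightly more elementary, since it avoids any appeal to Gagliardo--Nirenberg (and hence to the mass bound of Lemma~\ref{LemmaMassAndNablavRegularity}) at this stage; the paper's route, on the other hand, makes explicit use of the diffusion and thereby motivates why Lemma~\ref{lem:gni} is recorded separately.
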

\begin{proof}
 Standard testing procedures, together with the estimates on $f$ and $g$ from \eqref{assumptionsCorollary}, give
\begin{equation*}
\begin{split}
\frac{d}{dt}\int_\Omega u^p&=p\int_\Omega u^{p-1}\left(\Delta u-\chi\nabla \cdot \left(u \nabla v \right)+f(u)-g(\nabla u)\right)\\
&\leq -4\frac{p-1}{p}\int_\Omega |\nabla u^\frac{p}{2}|^2 -\chi (p-1)\int_\Omega u^{p}\Delta v+C_1p\int_\Omega u^{p-1}-C_3 p \int_\Omega u^{p-1}|\nabla u|^\gamma
\end{split}
\end{equation*}
on $(0,\Tmax)$, and straightforward manipulation of the term $-C_3p u^{p-1}|\nabla u|^\gamma$ results in the claim, if we take into account that by Young's inequality with some $c_1>0$ we have
\begin{equation*}\label{YoungForFofU}
C_1p\int_\Omega u^{p-1}\leq \chi (p-1)\int_\Omega u^{p+1} +c_1 \quad \textrm{on }\; (0,T_{max}),
\end{equation*}
and employ Lemma~\ref{lem:gni}.
\end{proof} 
Suitably estimating $\io u^{p+1}$ will use the largeness assumption on $\gamma$ given in \eqref{ConditionsOnGammaTheo}:
\begin{lemma}\label{lem:up+1controledGrad}
 Let $\frac{2N}{N+1}<\gamma\leq 2$. Then there is $p>\f{N}2$ such that for any $K>0$ there is $c=c(K,p)>0$ such that 
 \[
  K\int_\Omega u^{p+1} \leq C_3p \left(\frac{p-1+\gamma}{\gamma}\right)^{-\gamma}\int_\Omega |\nabla u^\frac{p-1+\gamma}{\gamma}|^\gamma +c \quad \textrm{on }\; (0,T_{max}).
 \]
\end{lemma}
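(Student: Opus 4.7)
The plan is to reduce the claim to a Gagliardo--Nirenberg interpolation for the function $w:=u^r$, where $r:=\frac{p-1+\gamma}{\gamma}$, and to absorb the resulting gradient term via Young's inequality. For any $p>\frac{N}{2}$ (which in particular gives $p>1$ since $N\ge 2$), one has $r>1$ and
\[
\int_\Omega u^{p+1} = \|w\|_{L^s(\Omega)}^s, \quad \int_\Omega |\nabla u^{\frac{p-1+\gamma}{\gamma}}|^\gamma = \int_\Omega |\nabla w|^\gamma, \quad \text{with } s:=\frac{p+1}{r}.
\]
Since the ``companion'' exponent will be $\frac1r$ and $\frac{1}{r}<1$, I would apply the generalized Gagliardo--Nirenberg inequality already invoked in Lemma~\ref{lem:gni} (\cite[Lemma 2.3]{LiLankeitNonlinearity2016Hapto}), obtaining a constant $\hat C_{GN}>0$ such that
\[
\|w\|_{L^s(\Omega)} \le \hat C_{GN}\bigl(\|\nabla w\|_{L^\gamma(\Omega)}^{\theta} \|w\|_{L^{1/r}(\Omega)}^{1-\theta} + \|w\|_{L^{1/r}(\Omega)}\bigr)
\]
where $\theta\in(0,1)$ is determined through $\frac{1}{s}=\theta(\frac{1}{\gamma}-\frac{1}{N})+(1-\theta)r$.

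Next I would exploit the mass bound from Lemma~\ref{LemmaMassAndNablavRegularity}, noting that
\[
\|w\|_{L^{1/r}(\Omega)} = \Bigl(\int_\Omega u\Bigr)^{r} \le m_0^{r},
\]
so that after raising the Gagliardo--Nirenberg inequality to the power $s$ and using \eqref{InequalitiAlgebraic2ToThePower} this yields constants $c_1,c_2>0$ with
\[
\int_\Omega u^{p+1} \le c_1 \|\nabla w\|_{L^\gamma(\Omega)}^{s\theta} + c_2 \qquad\text{on } (0,T_{max}).
\]
The crucial point will then be the condition $s\theta<\gamma$, because only then can the coefficient $K$ be absorbed via Young's inequality: a direct computation gives
\[
s\theta \;=\; \frac{p N\gamma}{N(p+\gamma-2)+\gamma},
\]
so that $s\theta<\gamma$ is equivalent to $N(2-\gamma)<\gamma$, i.e. precisely to $\gamma>\frac{2N}{N+1}$; this is exactly \eqref{ConditionsOnGammaTheo}. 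At the same time, one checks $\theta\in(0,1)$ and $s>\frac{1}{r}$ hold for every $p>1$ under this same hypothesis.

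With $s\theta<\gamma$ in hand, I would finish by applying Young's inequality, choosing the exponents $\frac{\gamma}{s\theta}$ and its conjugate, to bound
\[
K c_1 \|\nabla w\|_{L^\gamma(\Omega)}^{s\theta} \le C_3 p\Bigl(\frac{p-1+\gamma}{\gamma}\Bigr)^{-\gamma} \int_\Omega|\nabla w|^\gamma + c_3(K,p),
\]
which, combined with the constant term $Kc_2$, gives the desired inequality. I expect the only genuine obstacle to be the verification that the exponent arithmetic $s\theta<\gamma$ collapses exactly to the threshold $\gamma>\frac{2N}{N+1}$; once this algebraic identity is recorded, the rest is routine interpolation together with the a priori mass control and Young's inequality, with the choice of $p>\frac{N}{2}$ being otherwise essentially free (needed later to match the extensibility criterion \eqref{extcrit}).
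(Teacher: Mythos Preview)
Your argument is correct and follows the same route as the paper: the Gagliardo--Nirenberg inequality applied to $w=u^{(p-1+\gamma)/\gamma}$ together with the mass bound and Young's inequality, with the paper's condition $\frac{\theta(p+1)}{p-1+\gamma}\in(0,1)$ being precisely your $s\theta<\gamma$. Your verification that $s\theta<\gamma$ reduces to $\gamma>\frac{2N}{N+1}$ independently of $p$ (so that \emph{any} $p>\frac{N}{2}$ works, not merely some) is a small but useful sharpening of the paper's ``it is possible to choose $p>\frac{N}{2}$''.
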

\begin{proof}
Due to the assumption on $γ$, it is possible to choose $p>\f{N}2$ such that 
\begin{equation}\label{deftheta}
\theta=\frac{\frac{p-1+\gamma}{\gamma}(1-\frac{1}{p+1})}{\frac{1}{N}-\frac{1}{\gamma}+\frac{p-1+\gamma}{\gamma}}\in \left(0,1\right)\quad \textrm{and}\quad
\frac{\theta(p+1)}{p-1+\gamma}\in \left(0,1\right).
\end{equation}
By \eqref{deftheta}, the Gagliardo--Nirenberg and Young inequalities as in Lemma \ref{lem:gni}, in conjunction with
\eqref{massConservation} for any $K>0$, 
provide also some $C_{GN},c>0$ such that
\begin{equation*}
\begin{split}
K\int_\Omega u^{p+1}&=K\lVert u^\frac{p-1+\gamma}{\gamma}\rVert_{L^\frac{\gamma(p+1)}{p-1+\gamma}(\Omega)}^\frac{\gamma(p+1)}{p-1+\gamma}\\
&\leq K C_{GN}\left(\lVert \nabla u^\frac{p-1+\gamma}{\gamma}\rVert_{L^\gamma(\Omega)}^\frac{\theta\gamma(p+1)}{p-1+\gamma} \lVert u^\frac{p-1+\gamma}{\gamma}\rVert_{L^\frac{\gamma}{p-1+\gamma}(\Omega)}^\frac{(1-\theta)\gamma(p+1)}{p-1+\gamma}+\lVert u^\frac{p-1+\gamma}{\gamma}\rVert_{L^\frac{\gamma}{p-1+\gamma}(\Omega)}^\frac{\gamma(p+1)}{p-1+\gamma}\right)\\ 
& \leq C_3p \left(\frac{p-1+\gamma}{\gamma}\right)^{-\gamma}\int_\Omega |\nabla u^\frac{p-1+\gamma}{\gamma}|^\gamma +c \quad \textrm{on }\; (0,T_{max}).\qedhere
\end{split}
\end{equation*}
\end{proof}

For the remaining study toward the uniform boundedness of $\int_\Omega u^p$, we will distinguish the cases $\tau=0$ and $\tau=1$.
\subsection{Analysis of the parabolic-elliptic case\texorpdfstring{: $\tau=0$}{}}
\begin{lemma}\label{LemmaEst1}
Let $f$ and $g$ satisfy \eqref{assumptionsCorollary} and let $\frac{2N}{N+1}<\gamma\leq 2$. Then  there exists $L_0>0$ such that for some $p>\frac{N}{2}$
\begin{equation*}
\int_\Omega u^p\leq L_0 \quad \textrm{for all} \quad t \in \left(0,T_{max}\right).
\end{equation*}
\end{lemma}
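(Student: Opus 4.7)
The strategy is to exploit the elliptic identity for $v$ available when $\tau=0$ in order to turn the ``bad'' term $-\chi(p-1)\io u^p\Delta v$ from Lemma \ref{lem:ddtioup} into a multiple of $\io u^{p+1}$ with a favourable sign, and then absorb every $u^{p+1}$ contribution into the negative gradient term already present there via Lemma \ref{lem:up+1controledGrad}.

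First I would fix $p>\frac{N}{2}$ as the exponent provided by Lemma \ref{lem:up+1controledGrad}; this is possible precisely because $\frac{2N}{N+1}<\gamma\le 2$. Since $\tau=0$, the second equation in \eqref{system} reduces to $\Delta v=v-u$ pointwise. Inserting this identity into the inequality of Lemma \ref{lem:ddtioup} gives
\[
-\chi(p-1)\io u^p\Delta v+\chi(p-1)\io u^{p+1}=-\chi(p-1)\io u^p v+2\chi(p-1)\io u^{p+1},
\]
and since $u,v\ge 0$ (by Theorem \ref{localSol} together with the elliptic maximum principle applied to the $v$-equation), the first term on the right is nonpositive and can be discarded. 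Thus
\[
\frac{d}{dt}\io u^p+\io u^p\le 2\chi(p-1)\io u^{p+1}+K-C_3 p\left(\frac{p-1+\gamma}{\gamma}\right)^{-\gamma}\io\left|\nabla u^{\frac{p-1+\gamma}{\gamma}}\right|^\gamma
\]
on $(0,T_{max})$.

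The second step is to apply Lemma \ref{lem:up+1controledGrad} with the constant $2\chi(p-1)$ playing the role of $K$ (for the same $p$ chosen above), which absorbs the $u^{p+1}$-integral into the gradient dissipation up to a constant $c>0$. The gradient term then cancels, leaving the scalar ODI
\[
\frac{d}{dt}\io u^p+\io u^p\le K+c\quad\text{on }(0,T_{max}),
\]
from which a standard ODE comparison argument yields $\io u^p(\cdot,t)\le L_0:=\max\{\io u_0^p,\,K+c\}$ for all $t\in(0,T_{max})$.

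No single step is genuinely delicate here; the ingredient specific to $\tau=0$ is the pointwise identity $\Delta v=v-u$, which is unavailable in the fully parabolic case and forces a different treatment there. The compatibility of the exponent $p$ with both Lemma \ref{lem:ddtioup} and Lemma \ref{lem:up+1controledGrad} is automatic once $p$ is chosen according to the latter, thanks to the lower bound on $\gamma$ in \eqref{ConditionsOnGammaTheo}.
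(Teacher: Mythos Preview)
Your proof is correct and follows essentially the same route as the paper: both choose $p$ via Lemma~\ref{lem:up+1controledGrad}, insert the elliptic identity $\Delta v=v-u$ into Lemma~\ref{lem:ddtioup}, drop the nonpositive $-\chi(p-1)\io u^p v$ contribution to arrive at the key inequality with $2\chi(p-1)\io u^{p+1}$, absorb this via Lemma~\ref{lem:up+1controledGrad}, and conclude by ODE comparison. Your write-up is in fact slightly more explicit than the paper's in justifying $v\ge 0$ and in naming the resulting bound $L_0$.
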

\begin{proof}
We take $p$ from Lemma~\ref{lem:up+1controledGrad}. By explicitly using that $-u^pΔv=u^p(u-v)\le u^{p+1}$ on $\Omega \times (0,\TM)$, from Lemma~\ref{lem:ddtioup} we find that 
\begin{equation}\label{TestProcedureEll}
\frac{d}{dt}\int_\Omega u^p + \int_\Omega u^p \leq 
2\chi (p-1)\int_\Omega u^{p+1}-C_3 p \left(\frac{p-1+\gamma}{\gamma}\right)^{-\gamma}\int_\Omega |\nabla u^\frac{p-1+\gamma}{\gamma}|^\gamma+c_1
\end{equation}
on $(0,\Tmax)$.
Finally by taking into account relation \eqref{TestProcedureEll} and Lemma \ref{lem:up+1controledGrad} with $K=2\chi(p-1)$, we are given some $c_2>0$ fulfilling
\begin{equation*}
\frac{d}{dt}\int_\Omega u^p+\int_\Omega u^p\leq c_2\quad \textrm{on }\; \left(0,T_{max}\right),
\end{equation*}
which combined with $(\int_\Omega u^p)_{\mid_{t=0}}  =\int_\Omega u_0^p$ concludes the proof through comparison reasoning for ordinary differential inequalities. 
\end{proof}

\subsection{Analysis of the parabolic-parabolic case\texorpdfstring{: $\tau=1$}{}}
In the fully parabolic case, it is no longer possible to replace $Δv$ easily by inserting $u-v$; in the spirit of \cite{IshidaYokotaDCDS-SViaMSR,IshidaYokotaJDE-2012SmallData,SenbaSuzukiAAN2006} we will therefore rely on Maximal Sobolev regularity for the second equation, which will enable us to properly deal with integral terms arising in testing procedures.

As the essential tool, we thus first recall the following consequence of Maximal Sobolev regularity results (like \cite{hieber_pruess} or \cite[Thm. 2.3]{giga_sohr}):
 \begin{lemma}\label{lem:MaxReg}
  Let $N\in ℕ$, $\Omega\subset ℝ^N$ be a bounded domain with smooth boundary, $q\in(1,\infty)$, $μ>\f1{q}$ and $K>1$. Then there is $C_{MR}>0$ such that the following holds: Whenever $T\in(0,\infty]$, $I=[0,T)$, $f\in L^q(I;L^q(\Omega))$ and $v_0\in W^{2,q}(\Omega)$, with $∂_νv_0=0$ on $\partial\Omega$, is such that $\norm[W^{2,q}(\Om)]{v_0}\le K$, every solution $v\in W_{loc}^{1,q}(I;L^q(\Om))\cap L^q_{loc}(I;W^{2,q}(\Om))$ of 
 \[
  v_t=Δv-μv + f\;\; \text{ in }\;\;\Om\times(0,T);\quad
  \partial_{ν} v=0\;\; \text{ on }\;\;\partial\Omega \times(0,T); \quad v(\cdot,0)=v_0 \;\; \text{ on }\;\;\Omega  
 \]
 satisfies 
 \[
  \int_0^t e^s \kl{\io |Δv(\cdot,s)|^q}ds \le C_{MR} \left[1+\int_0^t e^s\kl{\io |f(\cdot,s)|^{q}}ds\right] \quad \text{for all } t\in(0,T).
 \]
 \end{lemma}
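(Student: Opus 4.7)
The idea is to absorb the exponential weight on the left-hand side into the unknown by the multiplicative substitution $w(x,s) := e^{s/q} v(x,s)$. A direct differentiation shows that $w$ satisfies the shifted heat equation
\[
w_t = \Delta w - \tilde\mu\, w + \tilde f \quad \text{in } \Omega\times(0,T),\qquad \partial_\nu w = 0, \qquad w(\cdot,0) = v_0,
\]
with $\tilde\mu := \mu - \tfrac{1}{q}$ and $\tilde f(\cdot,s) := e^{s/q} f(\cdot,s)$. Crucially, the hypothesis $\mu > 1/q$ is exactly what makes $\tilde\mu > 0$, so that $-\Delta + \tilde\mu$ with Neumann boundary conditions is a sectorial operator of negative type on $L^q(\Omega)$. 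The regularity of $v$ and $f$ assumed in the statement translates to the regularity needed for $w$ and $\tilde f$ on every compact subinterval of $[0,T)$.

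Next, I would invoke a standard maximal $L^q$-regularity theorem for the Neumann heat operator with strictly positive zero-order shift, such as \cite[Thm.~2.3]{giga_sohr} or the abstract framework of \cite{hieber_pruess}. Applied to $w$ on the interval $(0,t)$ for any $t\in(0,T)$, this yields a constant $C = C(\Omega,q,\tilde\mu)>0$, independent of $t$, such that
\[
\int_0^t \lVert \Delta w(\cdot,s)\rVert_{L^q(\Omega)}^q\, ds \le C\left(\lVert v_0\rVert_{W^{2,q}(\Omega)}^q + \int_0^t \lVert \tilde f(\cdot,s)\rVert_{L^q(\Omega)}^q\, ds\right).
\]
Translating back via $|\Delta w|^q = e^s |\Delta v|^q$ and $|\tilde f|^q = e^s |f|^q$ converts this into the weighted bound
\[
\int_0^t e^s \io |\Delta v(\cdot,s)|^q\, ds \le C\left(\lVert v_0\rVert_{W^{2,q}(\Omega)}^q + \int_0^t e^s \io |f(\cdot,s)|^q\, ds\right),
\]
and using $\lVert v_0\rVert_{W^{2,q}(\Omega)} \le K$ together with $K>1$ allows us to set $C_{MR}:=C K^q$ to absorb the first term into the ``$1+$'' on the right, finishing the proof.

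The only delicate point — and the one I would verify most carefully against the cited sources — is that the maximal regularity constant is truly uniform in $t\in(0,T)$ when $T=\infty$. This uniformity is precisely the consequence of having chosen a shift $\tilde\mu>0$: it makes the semigroup $(e^{s(\Delta - \tilde\mu)})_{s\ge 0}$ exponentially decaying and places the problem in the standard half-line maximal regularity setting. The strict inequality $\mu>1/q$ in the hypothesis is included for exactly this reason. Modulo this check, the whole argument reduces to a one-line transformation combined with the invocation of a well-known theorem.
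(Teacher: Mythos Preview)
Your proposal is correct and follows essentially the same route as the paper's proof: the paper performs exactly the substitution $z(x,t)=e^{t/q}v(x,t)$, derives the equation $z_t=\Delta z-(\mu-\tfrac1q)z+e^{t/q}f$, applies the maximal regularity results of \cite{hieber_pruess,giga_sohr}, and re-substitutes. The only cosmetic differences are that the paper controls the initial datum via the interpolation-space norm $\|v_0\|_{1-\frac1q,q}$ (which is in turn bounded by $K$) and makes explicit the factor $2^{q-1}$ arising from $(A+B)^q\le 2^{q-1}(A^q+B^q)$, yielding $C_{MR}=c_1^q 2^{q-1}K^q$.
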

\begin{proof}
For $(x,t)\in \Omega\times (0,T)$, let us set $z(x,t):=e^{\frac{t}{q}}v(x,t)$. 
Then easy computations establish that $z$ solves 
\begin{equation*}
\begin{cases}
 z_t=\Delta z -\left(μ-\frac{1}{q}\right)z +e^\frac{t}{q}f & \textrm{in } \Omega \times (0,T),\\
\partial_\nu z=0 & \textrm{on } \partial \Omega \times (0,T),\\
 z(x,0)= v_{0}(x) & x\in  \Omega.
\end{cases}
\end{equation*}
Subsequently, let us apply Maximal Sobolev regularity (\cite[(3.8)]{hieber_pruess}, \cite[Thm. 2.3]{giga_sohr}) to $A=Δ-(μ-\f1q)$, $X=L^q(\Omega)$ and $X_1=D(A)=W^{2,q}_\mathcal{\frac{\partial }{\partial \nu}}(\Omega)= \{w\in W^{2,q}(\Omega): \frac{\partial w}{\partial \nu}=0\,\,\textrm{on}\;\partial \Omega\}$, which asserts that with some $c_1>0$ we have for every $t\in(0,T)$ that
\begin{equation*}
\begin{split}
 \lVert Δz\rVert _{L^q([0,t];L^q(\Omega))} +\lVert z_{t}\rVert_{L^q([0,t];L^{q}(\Omega))}
\leq c_1 \kl{\lVert v_{0}\rVert_{1-\frac{1}{q},q}+
\Big(\int_0^t\|e^\frac{s}{q}f(\cdot,s)\|_{L^q(\Omega)}^q\, ds\Big)^\frac{1}{q}},
\end{split}
\end{equation*}
where $\lVert \cdot\rVert_{1-\frac{1}{q},q}$ represents the norm in the interpolation space $(X,X_1)_{1-\frac{1}{q},q}$, which can be estimated by $K$, according to the assumption on $v_0$. 
In turn, we have that for $C_{MR}=c_1^q2^{q-1}K^q$ (cf. \eqref{InequalitiAlgebraic2ToThePower}) and  all $t\in(0,T)$
\begin{align}\label{inequalityPrussBIS}
\int_0^t \Big(\int_\Omega |\Delta z(\cdot,s)|^{q}\Big)\,ds
&\leq c_1^{q}
\left[K+\Big(\int_0^t\|e^\frac{s}{q}f(\cdot,s)\|_{L^{q}(\Omega)}^{q}\, ds\Big)^\frac{1}{q}\right]^{q}\\
&\leq C_{MR} \left[1+\left(\int_0^t e^s \Big(\int_\Omega |f(\cdot,s)|^{q} \right)ds \right].\nn
\end{align}
We can finally obtain the claim by re-substituting $z(\cdot,t):=e^{\frac{t}{q}}v(\cdot,t)$ into relation \eqref{inequalityPrussBIS}. 
\end{proof}

With the aid of the above lemma, let us continue with the following $L^p$ bound.
\begin{lemma}\label{LemmaEst3}
Let $f$ and $g$ satisfy \eqref{assumptionsCorollary} and let $\frac{2N}{N+1}<\gamma\leq 2$. Then  there exists $L_1>0$ such that for some $p>\frac{N}{2}$
\begin{equation*}
\int_\Omega u^p\leq L_1 \quad \textrm{for all} \quad t \in \left(0,T_{max}\right).
\end{equation*}
\begin{proof}
Again we let $p>\frac{N}2$ be as in Lemma~\ref{lem:up+1controledGrad}. From Lemma \ref{lem:ddtioup} we have  
that 
\[
  \frac{d}{dt}\int_\Omega u^p + \int_\Omega u^p \le  \chi (p-1)\int_\Omega |u^p\Delta v|+ \chi (p-1)\io u^{p+1} 
  + c_1 -C_3 p \left(\frac{p-1+\gamma}{\gamma}\right)^{-\gamma}\int_\Omega |\nabla u^\frac{p-1+\gamma}{\gamma}|^\gamma
\]
on $(0,\Tmax)$. By invoking Young's inequality, we infer that on $(0,T_{max})$,
 \[
   \frac{d}{dt}\int_\Omega u^p + \int_\Omega u^p \le  \chi \scalemath[0.8]{\frac{(p-1)(2p+1)}{p+1}} \int_\Omega u^{p+1} + \chi\scalemath[0.8]{\frac{p-1}{p+1}} \int_\Omega|\Delta v|^{p+1}
   + c_1 -C_3 p \scalemath[0.8]{\left(\frac{p-1+\gamma}{\gamma}\right)}^{-\gamma}\int_\Omega |\nabla u^\frac{p-1+\gamma}{\gamma}|^\gamma. 
 \]
Successively, solving the above ordinary differential inequality for $\int_\Omega u(\cdot,t)^p$, 
we can derive  for all $t\in (0,\TM)$, 
\begin{align*}\label{FirstIneqForY}
&e^t\int_\Omega u^p 
\leq 
    \|u_0\|_{L^p(\Omega)}^p\\
    &+\int_0^t e^s\left[\chi \scalemath[0.8]{\frac{(p-1)(2p+1)}{p+1}}\int_\Omega u^{p+1}
    +\chi\frac{p-1}{p+1}\int_\Omega |\Delta v|^{p+1}
    -C_3 p \scalemath[0.8]{\left(\frac{p-1+\gamma}{\gamma}\right)}^{-\gamma}\int_\Omega |\nabla u^\frac{p-1+\gamma}{\gamma}|^\gamma+c_1\right]ds.
\end{align*}
In estimating the integral containing $Δv$, let us now exploit Lemma~\ref{lem:MaxReg} with $f=u$, $q=p+1$, $μ=1$ to obtain $C_{MR}>0$ such that for all $t\in(0,\TM)$
\begin{equation*}\label{ThirdIneqForY}
e^t\int_\Omega u^p 
\leq 
    c_2
    +\int_0^t e^s\left[c_3\int_\Omega u^{p+1}
    -C_3 p \left(\frac{p-1+\gamma}{\gamma}\right)^{-\gamma}\int_\Omega |\nabla u^\frac{p-1+\gamma}{\gamma}|^\gamma\right]ds+c_4(e^t-1),
\end{equation*}
where $c_2=\|u_0\|_{L^p(\Omega)}^p$, $c_3=\chi \frac{(2p+1)(p-1)}{p+1}+\chi\frac{p-1}{p+1}C_{MR}$ and $c_4=c_1+\frac{p-1}{p+1}\chi C_{MR}$. 
In this way, we again invoke Lemma~\ref{lem:up+1controledGrad}, applied with $K=c_3$, so as to rephrase the previous inequality as
\begin{equation*}
e^t\int_\Omega u^p \leq c_2+c_4(e^t-1)\quad \textrm{for all } t \in (0,\TM),
\end{equation*}
from which we obtain the assertion with $L_1:=c_2+c_4$.
\end{proof}
\end{lemma}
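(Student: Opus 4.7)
The plan is to mimic the testing procedure of Lemma \ref{LemmaEst1} with $p > N/2$ chosen as in Lemma \ref{lem:up+1controledGrad}, but in the fully parabolic case the elliptic substitution $-u^p\Delta v = u^p(u-v) \le u^{p+1}$ used there is no longer available. Starting from Lemma \ref{lem:ddtioup}, I would estimate
\[
-\chi(p-1)\int_\Omega u^p \Delta v \le \chi(p-1)\int_\Omega u^p|\Delta v|,
\]
and split the right-hand side by Young's inequality with conjugate exponents $\tfrac{p+1}{p}$ and $p+1$ into a multiple of $\int_\Omega u^{p+1}$ and a multiple of $\int_\Omega |\Delta v|^{p+1}$. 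Combining with the already-present $\chi(p-1)\int_\Omega u^{p+1}$ produces a differential inequality whose right-hand side features $\int_\Omega u^{p+1}$, $\int_\Omega |\Delta v|^{p+1}$, a negative multiple of $\int_\Omega |\nabla u^{(p-1+\gamma)/\gamma}|^\gamma$, and a constant.

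The $\Delta v$ term is where the parabolic-parabolic case becomes delicate: we do not have pointwise control over $\Delta v$, only spacetime integrability. The natural tool is Maximal Sobolev regularity (Lemma \ref{lem:MaxReg}), which requires an integral form of the inequality. To produce one, I would multiply the differential inequality by the integrating factor $e^t$ and integrate in time, obtaining
\[
e^t\int_\Omega u^p \le \int_\Omega u_0^p + \int_0^t e^s\,[\mathrm{RHS}]\,ds.
\]
Applying Lemma \ref{lem:MaxReg} with $f=u$, $\mu=1$, $q=p+1$ then dominates $\int_0^t e^s \int_\Omega |\Delta v|^{p+1}\,ds$ by a constant plus $\int_0^t e^s \int_\Omega u^{p+1}\,ds$, allowing me to merge the two $u^{p+1}$ contributions into a single integral with an enlarged coefficient.

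Having arranged the integrated inequality in the schematic form
\[
e^t\int_\Omega u^p \le c_1 + \int_0^t e^s\left[K\int_\Omega u^{p+1} - C_3 p\left(\tfrac{p-1+\gamma}{\gamma}\right)^{-\gamma}\int_\Omega |\nabla u^{(p-1+\gamma)/\gamma}|^\gamma\right]ds + c_2(e^t-1),
\]
I would invoke Lemma \ref{lem:up+1controledGrad} with that same $K$ to conclude that the bracketed integrand is nonpositive up to a constant. This leaves $e^t\int_\Omega u^p \le c_3 + c_4(e^t-1)$, and dividing by $e^t$ yields the uniform bound $L_1$.

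The main obstacle, and what makes the fully parabolic case genuinely more subtle than the elliptic one, is ensuring that the constants align: the coefficient $K$ of $\int_\Omega u^{p+1}$ that emerges after applying Maximal Sobolev regularity depends on $C_{MR}$, and we must still be able to absorb this quantity via Lemma \ref{lem:up+1controledGrad}. Fortunately, that lemma permits arbitrary $K>0$, so no circularity arises; the essential hypothesis $\gamma > \tfrac{2N}{N+1}$ is precisely what makes the choice of $p>N/2$ in Lemma \ref{lem:up+1controledGrad} possible, and it is this structural ingredient that ultimately closes the estimate.
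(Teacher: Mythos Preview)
Your proposal is correct and follows essentially the same route as the paper: start from Lemma~\ref{lem:ddtioup}, apply Young's inequality with exponents $\tfrac{p+1}{p}$ and $p+1$ to the cross term, pass to the integrated form via the factor $e^t$, invoke Lemma~\ref{lem:MaxReg} with $q=p+1$, $\mu=1$, $f=u$ to absorb $\int_0^t e^s\int_\Omega |\Delta v|^{p+1}\,ds$ into $\int_0^t e^s\int_\Omega u^{p+1}\,ds$, and close with Lemma~\ref{lem:up+1controledGrad}. Your observation that Lemma~\ref{lem:up+1controledGrad} accepts arbitrary $K>0$ is exactly the point that prevents any circularity.
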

Finally  we are in a position to conclude.

\subsection{Final proofs of boundedness and global existence}\label{SusectionProofTheo}
\begin{proof}[Proof of Theorem~\ref{MainTheorem}]
By Lemma~\ref{lem:extcrit} and Theorem \ref{localSol}, whenever the first component of the local solution $(u,v)$ to problem \eqref{system} belongs to $L^{\infty}((0,\TM); L^p(\Omega))$ for some $p>\frac{N}{2}$, then actually $u\in L^{\infty}(\Omega \times (0,\infty))$;  evidently, by using the equation for $v$, if $u$ lies in $L^{\infty}(\Omega \times (0,\infty))$, so does $v$. Subsequently, the claim follows for $\gamma>\frac{2N}{N+1}$ and $\tau=0$ and $\tau=1$, by invoking Lemmas \ref{LemmaEst1} and \ref{LemmaEst3}, respectively.
\end{proof}
\begin{proof}[Proof of Corollary~\ref{Corollary}]
Let us observe that for any $a,b,C_2>0$ and $β>α\ge 1$, the function defined by $f(s)=as^{α}-bs^{β}$ is continuously differentiable, so that \eqref{fAssTheo} holds, and satisfies $\lim_{s\to\infty} (f(s)+C_2s)=-\infty$, so that there is $C_1>0$ such that $f(s)+C_2s\le C_1$ for every $s\ge 0$. In particular, $f$ satisfies \eqref{ConstantForfThe} and \eqref{assumptionsCorollary}. Similarly, for any $γ\in[1,2]$ and $c>0$, $g\colon ℝ^N\to ℝ$, $g(z)=c|z|^{γ}$ obeys \eqref{GlocallyLips}, \eqref{ConstantForg} and \eqref{assumptionsCorollary}. We therefore can apply Theorem \ref{MainTheorem}.
\end{proof}
\subsection*{Acknowledgments}
GV is member of the Gruppo Nazionale per l'Analisi Matematica, la Probabilit\`a e le loro Applicazioni (GNAMPA) of the Istituto Nazionale di Alta Matematica (INdAM) and is partially supported by the research projects \emph{Evolutive and Stationary Partial Differential Equations with a Focus on Biomathematics} (2019, Grant Number: F72F20000200007),  {\em  Analysis of PDEs in connection with real phenomena} (2022), funded by  \href{https://www.fondazionedisardegna.it/}{Fondazione di Sardegna} (2021, Grant Number: F73C22001130007) and by MIUR (Italian Ministry of Education, University and Research) Prin 2017 \emph{Nonlinear Differential Problems via Variational, Topological and Set-valued Methods} (Grant Number: 2017AYM8XW) 
and by GNAMPA-INdAM Project \emph{Equazioni differenziali alle derivate parziali nella modellizzazione di fenomeni reali}
(CUP\_E53C22001930001). SI is supported by JSPS KAKENHI Grant Number JP21K13815.

\footnotesize 
  \setlength{\parskip}{0pt}
  \setlength{\itemsep}{0pt plus 0.2ex}

\def\cprime{$'$}

\end{document}